\newcommand{\id}{\operatorname{id}}
\newcommand\numberthis{\addtocounter{equation}{1}\tag{\theequation}}
\newtheoremstyle{slanted}
{3pt}
{3pt}
{\slshape}
{}
{\bfseries}
{.}
{.5em}
{}
\theoremstyle{slanted}
\newtheorem{thm}{Theorem}[section]
\newtheorem{lem}[thm]{Lemma}
\newtheorem{cor}[thm]{Corollary}
\newtheorem{conj}[thm]{Conjecture}
\newtheorem{conv}[thm]{Convention}
\newtheorem{defn}[thm]{Definition}
\theoremstyle{remark}
\newtheorem{rem}[thm]{Remark}
\newtheorem{ex}[thm]{Example}
\begin{document}

\date{}
\title[Equivalence Classes in $S_n$ for Three Families of Pattern-Replacement Relations]{Equivalence Classes in $S_n$ for Three Families of Pattern-Replacement Relations}
\author{William Kuszmaul}
\author{Ziling Zhou}
\maketitle
\begin{abstract}
We study a family of equivalence relations on $S_n$, the group of permutations on $n$ letters, created in a manner similar to that of the Knuth relation and the forgotten relation. For our purposes, two permutations are in the same equivalence class if one can be reached from the other through a series of pattern-replacements using patterns whose order permutations are in the same part of a predetermined partition of $S_c$. In particular, we are interested in the number of classes created in $S_n$ by each relation and in characterizing these classes.

Imposing the condition that the partition of $S_c$ has one nontrivial part containing the cyclic shifts of a single permutation, we find enumerations for the number of nontrivial classes. When the permutation is the identity, we are able to compare the sizes of these classes and connect parts of the problem to Young tableaux and Catalan lattice paths. 

Imposing the condition that the partition has one nontrivial part containing all of the permutations in $S_c$ beginning with $1$, we both enumerate and characterize the classes in $S_n$. We do the same for the partition that has two nontrivial parts, one containing all of the permutations in $S_c$ beginning with $1$, and one containing all of the permutations in $S_c$ ending with $1$. 
\end{abstract}

\section{Introduction}
In 1970, the Robinson-Schensted-Knuth (RSK) correspondence brought the so-called Knuth relation to the forefront of mathematics.  This is an equivalence relation on permutations (or, more generally, words) which connects two permutations if one can be transformed into the other by a given set of rules that allow the switching of neighboring letters (as in bubblesort) under particular conditions \cite{Kn}. An analogue of this equivalence, the so-called forgotten relation \cite{NS}, later emerged, sharing some algebraic applications with the Knuth relation. Both the Knuth and the forgotten relation have a common structure: They are transitive, reflexive, symmetric closures of relations given by allowing the rearrangement of blocks of adjacent letters in a permutation from a certain order into another given order. This inspired various authors \cite{LPRW}, \cite{PRW}, \cite{Kuszmaul12} to systematically analyze relations of the same type, leading to numerous results on the number of equivalence classes and their sizes. This paper extends the study to consider pattern-replacement relations using patterns of arbitrary length. To our knowledge, we are the first to analyze an infinite family of pattern-replacement relations.

Before we present our results, we describe permutation
pattern-replacement equivalences in more detail. Given a set $S
\subseteq S_c$ of patterns (which we call our \emph{replacement set}),
such as $S = \{123, 321\}$, we can define an equivalence relation on
$S_n$ as follows. Whenever a permutation $w$ contains some pattern $p
\in S$ (meaning there is a contiguous subword of $w$ which is order
isomorphic to $p$), we allow ourselves to rearrange the letters within
that pattern to form any other pattern in $S$. The resulting
permutation $w'$ is said to be \emph{equivalent} to $w$ under
\emph{$S$-equivalence}. For example, if $S = \{123, 321\}$, then the
letters $134$ form a $123$ pattern in the permutation $51342$, and
thus we can rearrange them to form a $321$ pattern in $54312$. This
means that $51342$ is equivalent to $54312$ under $\{123,
321\}$-equivalence. We can continue like this, noting that $543$ forms
a $321$ pattern in $54312$; rearranging it to form a $123$ pattern, we
get $34512$. This means that $51342$, $54312$, and $34512$ are all
equivalent under $\{123, 321\}$-equivalence. Moreover, one can check
that no other permutations are equivalent to them, meaning that these
three permutations form an equivalence class. In general, given a set
$S$, we wish to study the equivalence classes in $S_n$ under
$S$-equivalence.

Rather than just considering a single set $S$, one can consider a
collection $P$ of disjoint subsets of $S_c$. (We say $P$ is a
\emph{replacement partition}.) Two permutations $x$ and $y$ are
equivalent under $P$-equivalence if $y$ can be reached from $x$ by a
series of pattern replacements in which patterns are only replaced
with other patterns which are in the same part in $P$. For example,
under $\{123, 321, 231\}\{213, 132\}$-equivalence, one is allowed to
replace $123$ patterns, $321$ patterns, and $231$ patterns with each
other; and to replace $213$ patterns and $132$ patterns with each
other.

Most of these conventions follow closely those of \cite{LPRW},
\cite{PRW}, and \cite{Kuszmaul12}. Additionally, we will often use the
word \emph{hit} in place of \emph{pattern} when referring to $c$
consecutive letters in a word which form a pattern; this allows us to
disambiguate the letters forming the pattern from the actual pattern
in $S_c$. For a replacement set or replacement partition $K$, we will
use \emph{$K$-hit} to refer to an instance of some pattern in $K$. 

 This paper considers infinite families of $K$-equivalences. The
 families we consider are the following:
\begin{itemize}
\item $K$ is the replacement set containing a single permutation and
  all of its cyclic shifts. In this case, we study the nontrivial
  classes in Section \ref{subsecrot}. (A class is called
  \emph{trivial} if it contains a single permutation.)
\item $K$ is the replacement set containing all permutations in $S_k$
  starting with $1$. Here, we enumerate and characterize the classes
  in Section \ref{subsecstartwith1}. (This solves a previously open
  case for $k=3$.)
\item $K$ has exactly two parts -- one of them containing all
  permutations in $S_k$ starting with $1$, and the other containing
  all permutations in $S_k$ ending with $1$. Again, we are able to
  enumerate and characterize the classes (Section
  \ref{subsecstartandendwith1}). (This again solves a previously open
  case for $k=3$.)
\end{itemize} 

The centerpiece is Section \ref{subsecrot} with the treatment of the
cyclic-shifts case. The main enumeration in this case leads us to
consider Young tableaux and Catalan paths. Section \ref{secconclusion}
concludes with a discussion of open problems and future work.

\section{Replacement Partitions Using Cyclic Shifts}\label{subsecrot}
 In this section, we consider replacement sets containing a
 permutation in $S_c$ and its cyclic shifts. In the case of $c=3$, this was
 previously studied by \cite{PRW}. Thus when $c=3$, our results serve
 as alternative proofs for several of theirs.

\begin{defn}
The \emph{cyclic shifts} of a permutation $m \in S_c$ are the
permutations of the form $ab$ for some $a$ and $b$ such that $m =
ba$. Note that either $a$ or $b$ may be the empty word.

Moreover, the cyclic shift beginning with the $j$-th letter of $m$ is
denoted $m^{+j}$.
\end{defn}

Observe that the cyclic shifts of $m$ can be obtained by repeatedly
killing the final letter of $m$ and reappending to $m$'s
beginning. For example, the cyclic shifts of $6172534$ are

\begin{center}
\begin{tabular}{r c c c c c c c c c c c c c c}
$m^{+1} =$&6&1&7&2&5&3&4&&&&&&& \\
$m^{+2} =$&&1&7&2&5&3&4&6&&&&&& \\ 
$m^{+3} =$&&&7&2&5&3&4&6&1&&&&& \\
$m^{+4} =$&&&&2&5&3&4&6&1&7&&&& \\
$m^{+5} =$&&&&&5&3&4&6&1&7&2&&& \\
$m^{+6} =$&&&&&&3&4&6&1&7&2&5&& \\
$m^{+7} =$&&&&&&&4&6&1&7&2&5&3&
\end{tabular}
\end{center}

Throughout the section we use $m$ to denote a permutation of size $c$
(i.e., in $S_c$). We then use $K$ to denote the pattern replacement
set containing the cyclic shifts of $m$.

\begin{ex} If $m=13542$, then $K=\{13542, 35421, 54213, 42135, 21354\}$.
\end{ex}

Our goal is to characterize the nontrivial classes in $S_n$, that is
the classes of size greater than one, under $K$-equivalence. Surprisingly, we
find that regardless of one's choice for $m \in S_c$ and $n \in
\mathbb{N}$, there are never more than two nontrivial equivalence
classes in $S_n$ under $K$-equivalence (Theorem \ref{thm1}). Moreover,
when $c$ is odd, parity is maintained by $K$-rearrangements, implying
that for $n>c$ there are two nontrivial classes, one for each parity
of non $K$-avoiders in $S_n$. 

We then restrict ourselves to the case of $m$ being the identity in
$S_c$ (and $n>c$). When $c$ is even, we find that there is only one
equivalence class in $S_n$ (Corollary \ref{singleclasscor}). When $c$
is odd and $n$ is even, we show that there are two equi-sized
nontrivial equivalence classes in $S_n$ (Theorem \ref{even n
  rot}). When $c$ is odd, $n$ is odd, and $c > n/2$ we express the
difference between the sizes of the two nontrivial classes in $S_n$ in
terms of the Catalan number (Theorem \ref{difference in parity of odd
  n}). For the remaining case of $c$ odd, $n$ odd, and $c < n / 2$, we
reduce the problem of comparing the nontrivial class sizes in $S_n$ to
a problem having to do with hit-huggers (Lemma \ref{hit-ended -->
  non-avoiding}).

We begin by showing that for $n > c$, and for any choice of $m \in
S_c$, there are at most two non-trivial equivalence classes in $S_n$
under $K$-equivalence. To do this, we find two permutations $p_n$
and $q_n$ so that every non $K$-avoiding permutation in $S_n$ is
equivalent to one of them.

In particular, let $p_n$ denote the permutation constructed as
follows. Take the identity permutation in $S_n$ and rearrange the
letters $2,\ldots,c+1$ to form $m$. Then construct $q_n$ by swapping
the letters $1$ and $2$ in $p_n$.

\begin{ex} If $m = 23541$, then $p_{9} = 134652789$ and $q_9 = 234651789$.
\end{ex}

\begin{defn}
A permutation in $S_n$ is \emph{$pq$-reachable} if it is equivalent
  to one of $p_n$ or $q_n$ under $K$-equivalence.
\end{defn}

We want to show that the $pq$-reachable permutations are exactly the
non $K$-avoiding permutations in $S_{n>c}$. 

\begin{defn}
Let $w \in S_n$ for some $n$, and $i \in \mathbb{N}$ be so that $1 \le
i \le n + 1$. We denote by $i \rightharpoonup w$ the permutation in
$S_{n+1}$ obtained as follows. Increasing by $1$ each letter of $w$
which is greater than or equal to $i$ in value. Then append $i$ to the
left end of $w$.

Similarly, $w \leftharpoonup i$ is the word obtained by increasing by
$1$ every letter of $w$ which is greater than or equal to $i$, and
then appending $i$ to the right of $w$.
\end{defn}

\begin{ex} Observe that $4 \rightharpoonup 123456 = 4123567$.
\end{ex}

Note that when $n=c+1$, $p_n = 1 \rightharpoonup m$ and $q_n = 2
\rightharpoonup m$. Also note for all $n$ that if $w \equiv v$ for two
permutations $w,v \in S_n$, then $i \rightharpoonup w \equiv i
\rightharpoonup v$ and $w \leftharpoonup i \equiv v \leftharpoonup i$.

We begin with a Lemma allowing us to move around hits within a
permutation.

\begin{lem}\label{harplem}
For any $j \in \{1,2,\ldots,c\}$, we have $j \rightharpoonup m
\equiv m \leftharpoonup (j+1)$ and $m \leftharpoonup j \equiv (j+1)
\rightharpoonup m$ under $K$-equivalence.
\end{lem}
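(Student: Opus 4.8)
The plan is to exploit the fact that $K$ contains all cyclic shifts of $m$ in one part, which lets me freely ``rotate'' the copy of $m$ that sits inside each of $j \rightharpoonup m$ and $m \leftharpoonup(j+1)$. The key observation is that if we delete the initial letter $j$ from $j \rightharpoonup m$, the remaining word is obtained from $m$ by increasing every letter $\geq j$ by $1$, and hence still has order permutation $m$ (an order-preserving relabeling does not change the standardization). This remaining word occupies a contiguous block of length $c$, so I may apply the transformation $m \to m^{+a}$ to it for any $a$, since $m^{+1}, m^{+2}, \ldots, m^{+c}$ all lie in the nontrivial part of $K$. Checking letter sets shows that the permutation obtained this way is precisely $j \rightharpoonup m^{+a}$. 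Thus $j \rightharpoonup m \equiv j \rightharpoonup m^{+a}$ for every $a\in\{1,\ldots,c\}$; symmetrically, deleting the final letter of $m\leftharpoonup(j+1)$ exposes a length-$c$ block of order permutation $m$ and yields $m\leftharpoonup(j+1)\equiv m^{+b}\leftharpoonup(j+1)$ for every $b$.

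It then remains to join these two families by a single equality of permutations. Let $t$ be the index with $m_t = j$ (so I take $j\leq c$, the range in which both sides of the claimed identity are honest permutations of $\{1,\ldots,c+1\}$; the superscripts on $m^{+a}$ are read modulo $c$). I claim
\[
j \rightharpoonup m^{+(t+1)} \;=\; m^{+t} \leftharpoonup (j+1).
\]
On the left, $m^{+(t+1)} = m_{t+1}m_{t+2}\cdots m_t$ ends in $m_t = j$, so after increasing letters $\geq j$ by $1$ and prepending $j$ the word reads $j,\, b(m_{t+1}),\, \ldots,\, b(m_{t-1}),\, j+1$, where $b(x) = x+1$ for $x>j$ and $b(x)=x$ for $x<j$. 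On the right, $m^{+t} = m_t m_{t+1}\cdots m_{t-1}$ begins with $m_t=j$, so after increasing letters $\geq j+1$ by $1$ and appending $j+1$ the word reads $j,\, b'(m_{t+1}),\, \ldots,\, b'(m_{t-1}),\, j+1$, where $b'(x)=x+1$ for $x\geq j+1$ and $b'(x)=x$ otherwise. Since $m_{t+1},\ldots,m_{t-1}$ run through exactly $\{1,\ldots,c\}\setminus\{j\}$, the maps $b$ and $b'$ agree on all these letters, so the two words coincide. Chaining the equivalences now gives $j\rightharpoonup m \equiv j\rightharpoonup m^{+(t+1)} = m^{+t}\leftharpoonup(j+1) \equiv m\leftharpoonup(j+1)$, which is the first identity.

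The second identity $m\leftharpoonup j \equiv (j+1)\rightharpoonup m$ follows by the same three moves with the roles of the first and last letters interchanged: using the length-$c$ block of order permutation $m$ at the front of $m\leftharpoonup j$ and at the back of $(j+1)\rightharpoonup m$, one gets $m\leftharpoonup j\equiv m^{+b}\leftharpoonup j$ and $(j+1)\rightharpoonup m\equiv (j+1)\rightharpoonup m^{+a}$, and then the analogous bump-function comparison on $\{1,\ldots,c\}\setminus\{j\}$ shows $m^{+t}\leftharpoonup j = (j+1)\rightharpoonup m^{+(t+1)}$. I expect the only real work to be the bookkeeping in these middle steps --- pinning down the cyclic indices $t$ and $t+1$ and checking that the two elementary insertions produce literally the same permutation once the inner $m$-block has been rotated into the right position. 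There should be no conceptual obstacle: the cyclic-shift hypothesis on $K$ is exactly what makes that inner block freely rotatable, and everything else reduces to the two standardization remarks above.
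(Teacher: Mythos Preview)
Your proof is correct and follows essentially the same approach as the paper's: choose the cyclic shift $m^{+(t+1)}$ (the paper writes $m^{+a}$ with $m_{a-1}=j$, i.e.\ $a=t+1$) so that $j\rightharpoonup m^{+(t+1)} = m^{+t}\leftharpoonup(j+1)$ holds as an equality of words, then use the freedom to rotate the inner length-$c$ block to connect this to $j\rightharpoonup m$ on one side and $m\leftharpoonup(j+1)$ on the other. Your bump-function computation is exactly the content of the paper's ``it is easy to see,'' and your explicit justification of $j\rightharpoonup m \equiv j\rightharpoonup m^{+a}$ makes transparent a step the paper leaves implicit.
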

\begin{proof}
Let $a \in \{1,2,\ldots,c\}$ be such that $m_{a-1}=j$ (with indices
cyclic modulo $c$). Then, it is easy to see that $j \rightharpoonup
m^{+a} = m^{+(a-1)} \leftharpoonup (j+1)$. As a consequence, $j
\rightharpoonup m \equiv j \rightharpoonup m^{+a} = m^{+b}
\leftharpoonup (j+1) \equiv m \leftharpoonup (j+1)$. This proves the
first part of the lemma and the second follows by symmetry.
\end{proof}

Next we consider the simple case of $K$-equivalence acting on $S_{c+1}$.
\begin{lem}\label{lem1}
All non $K$-avoiders in $S_{c+1}$ are $pq$-reachable.
\end{lem}
\begin{proof}
Let $w \in S_{c+1}$ be a non $K$-avoider. Then $w$ is of the form
either $i \rightharpoonup m^{+a}$ or $m^{+a} \leftharpoonup i$ for
some $i\in \{1,2,\ldots, c + 1\}$ and some $a\in \{1,2,\ldots,c
\}$. Thus either $w \equiv i \rightharpoonup m$ or $w \equiv m
\leftharpoonup i$ respectively. Hence we only need to prove that $i
\rightharpoonup m$ and $m \leftharpoonup i$ are
$pq$-reachable. Repeatedly using Lemma \ref{harplem}, we obtain the
chains of equivalences
\begin{eqnarray}
1 \rightharpoonup m \equiv m \leftharpoonup 2 \equiv 3 \rightharpoonup m \equiv m \leftharpoonup 4 \equiv \cdots, \label{lem1.pf.3} \\
m \leftharpoonup 1 \equiv 2 \rightharpoonup m \equiv m \leftharpoonup 3 \equiv 4 \rightharpoonup m \equiv \cdots. \label{lem1.pf.4}
\end{eqnarray}

For a given $i$, both $i \rightharpoonup m$ and $m \leftharpoonup i$
appear in one of the chains, and thus each is equivalent to either $1
\rightharpoonup m = p_{c+1}$ or $2 \rightharpoonup m = q_{c+1}$.
\end{proof}

\begin{cor}\label{singleclasscor}
If $c$ is even and $m = \id_c$, then there is only one nontrivial class in $S_{c+1}$ under $K$-equivalence.
\end{cor}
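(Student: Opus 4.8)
The plan is to reduce everything to Lemma \ref{lem1}. That lemma already tells us that in $S_{c+1}$ every non-avoider is equivalent to $p$ or to $q$; since avoiders form trivial classes and non-avoiders form nontrivial ones, there are at most two nontrivial classes, $[p]$ and $[q]$. So it suffices to prove $p\equiv q$ under the hypotheses $c$ even and $m=\operatorname{id}$.

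The first step I would carry out is to record a coincidence of \emph{words}: when $m=\operatorname{id}$, the words $1\rightharpoonup m$ and $m\leftharpoonup(c+1)$ are literally equal, both being the identity permutation $12\cdots(c+1)\in S_{c+1}$ (on the one hand, raising every letter of $12\cdots c$ by $1$ and prepending $1$ gives $123\cdots(c+1)$; on the other, no letter of $12\cdots c$ is $\ge c+1$, so we just append $c+1$). Since $p=1\rightharpoonup m$ in the case $n=c+1$, this says $p=m\leftharpoonup(c+1)$.

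The second step is to invoke the chain \eqref{lem1.pf.4} from the proof of Lemma \ref{lem1},
\[
m\leftharpoonup 1\equiv 2\rightharpoonup m\equiv m\leftharpoonup 3\equiv 4\rightharpoonup m\equiv\cdots,
\]
obtained by alternately applying the two halves of Lemma \ref{harplem}. Its $t$th entry is $m\leftharpoonup t$ for $t$ odd and $t\rightharpoonup m$ for $t$ even, and the step from entry $t$ to entry $t+1$ is legitimate precisely when $t\le c$. When $c$ is even the chain therefore runs all the way to the entry indexed $c+1$, which — as $c+1$ is odd — is $m\leftharpoonup(c+1)$, and it stops there. Hence this single chain contains both $q=2\rightharpoonup m$ (its second entry) and $m\leftharpoonup(c+1)=p$ (its last entry), so $q\equiv p$. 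Combined with Lemma \ref{lem1}, the two candidate nontrivial classes collapse to one; and since $p\ne q$ as permutations, that class is genuinely nontrivial.

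I expect the only delicate point to be the parity bookkeeping in the last step: one must check that for $c$ even the alternating chain terminates exactly at $m\leftharpoonup(c+1)$ rather than overshooting or stopping short, and this is precisely where the hypothesis ``$c$ even'' is consumed (for $c$ odd the same chain ends at $(c+1)\rightharpoonup m$ instead, and no identification of $p$ with an entry of chain \eqref{lem1.pf.4} is available). Everything else is a direct appeal to Lemmas \ref{harplem} and \ref{lem1}.
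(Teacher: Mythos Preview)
Your proof is correct and is essentially the paper's own argument: both hinge on the single observation that, when $m=\operatorname{id}$ and $n=c+1$, the identity permutation can be written simultaneously as $1\rightharpoonup m$ (which is $p$, the first term of chain~\eqref{lem1.pf.3}) and as $m\leftharpoonup(c+1)$, which for even $c$ sits in chain~\eqref{lem1.pf.4} alongside $q=2\rightharpoonup m$. The paper phrases this as ``the two chains merge,'' while you phrase it as ``$p$ already lies in chain~\eqref{lem1.pf.4},'' but the content is identical.
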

\begin{proof}
We follow the proof of Lemma \ref{lem1}, but notice that now the
equivalence chains (\ref{lem1.pf.3}) and (\ref{lem1.pf.4}) are merged
because the identity in $S_n$ can be written as either $1
\rightharpoonup m$ (an element of the chain (\ref{lem1.pf.3})) or $m
\leftharpoonup n$ (an element of the chain (\ref{lem1.pf.4})).
\end{proof}

Using Lemma \ref{lem1} as a base case, we can now prove that all non
$K$-avoiders are $pq$-reachable under $K$-equivalence.
\begin{thm}\label{thm1}
When $n > c$, all non $K$-avoiders in $S_n$ are $pq$-reachable under $K$-equivalence.
\end{thm} 
\begin{proof}
We will prove this by inducting on $n$, with Lemma \ref{lem1} serving as a base case (the case $n = c + 1$). Assume that the theorem holds for $S_{n-1}$ and that $n \ge c+2$. Let $x\in S_n$ be a non $K$-avoider. If $x$ only has a $K$-hit in its right-most $c$ letters, then applying Lemma \ref{harplem} to the right-most $c+1$ letters, we slide the $K$-hit to the left, placing it in the left-most $n-1$ letters. Thus without loss of generality, $x$ has a $K$-hit in its left-most $n-1$ letters. Applying the inductive hypothesis to the left-most $n-1$ letters of $x$, one can get a permutation $x'$ of the form $a \cdots u$ where $a \in \{ 1,2,3 \}$, $u$ is the right-most letter of $x$, and the letters not shown are in increasing order, except with the first $c$ of them rearranged to form a $K$-hit. Noting that the contiguous subsequence $x'_2 x'_3 \cdots x'_{c+1}$ forms a $K$-hit, and applying the inductive hypothesis to the right-most $n-1$ letters of $x'$, one can get a permutation $x''$ of the form $a b \cdots$ where $a,b\in \{ 1,2,3 \}$ and the letters not shown start off with a $K$-hit containing the letters up to $c+2$ and proceed with the remaining $n-c-2$ letters in increasing order. In showing that all such permutations are $pq$-reachable, it can be assumed without loss of generality that $n = c+2$ (as otherwise, we can simply restrict ourselves to considering the left-most $c+2$ letters). Given this assumption, we may rewrite $x''$ as $j \rightharpoonup (k \rightharpoonup m)$ for some $j \in\{1,2,3\}$ and $k\in\{1,2\}$.

In other words, we only need to show that the 6 permutations in
$S_{c+2}$ of the form $j \rightharpoonup (k \rightharpoonup m)$ for $j
\in\{1,2,3\}$ and $k\in\{1,2\}$ are $pq$-reachable. Let $w = j
\rightharpoonup (k \rightharpoonup m)$ be such a permutation. If $k
\not\equiv n-1 \mod 2$, then applying chains (\ref{lem1.pf.3}) and
(\ref{lem1.pf.4}) we go from $w$ to $j \rightharpoonup (m
\leftharpoonup (n-1)) = (j \rightharpoonup m) \leftharpoonup n$, and
then to $(j' \rightharpoonup m) \leftharpoonup n$ for some $j' \in
\{1,2\}$, thus reaching $p_{c+2}$ or $q_{c+2}$. If $k\equiv n-1\mod 2$
and $c+2 \ge 8$, we apply the chains to go from $w$ to $j
\rightharpoonup (m \leftharpoonup k') = (j \rightharpoonup m)
\leftharpoonup (k'+1)$ for some $k' \in \{3,4\}$ with $k' \not\equiv n
- 1 \mod 2$, then to $(j' \rightharpoonup m) \leftharpoonup (k'+1) =
(j'+1) \rightharpoonup (m \leftharpoonup (k'+1))$ for some $j' \in
\{5, 6\}$, then to $(j'+1) \rightharpoonup (m \leftharpoonup (n-1)) =
((j'+1) \rightharpoonup m ) \leftharpoonup n$, and to $(j''
\rightharpoonup m) \leftharpoonup n$ for some $j'' \in \{1,2\}$, thus
reaching $p_{c+2}$ or $q_{c+2}$. The case of $c < 6$ with $k \equiv
n-1 \mod 2$ can easily be checked by computer. For every possibility
for $m$ (up to rotational and reflectional symmetry) in $S_{c < 6}$,
one simply constructs the equivalence classes in $S_{c+2}$ and
verifies that the classes contain the desired
permutations. Alternatively, for $c \neq 3$, one can also use an
argument similar to but slightly more complicated than the one above.
\end{proof}

\begin{rem}
As a result of Theorem \ref{thm1}, we can start to characterize the classes created under $K$-equivalence in $S_n$ for $n>c$. Indeed for odd $c$, because parity is an invariant, there are \emph{always} two nontrivial classes in $S_n$, one for even permutations, and one for odd. For even $c$, there are always $\leq 2$ nontrivial classes. Observe that if $m = \id_c$, then by Corollary \ref{singleclasscor}, $p_n \equiv q_n$, so there is only one nontrivial class. But this is not the case for all $m$. For example, when $m = 145236$, there appears to always be two nontrivial classes in $S_n$.
\end{rem}

This brings us to the following conjecture, which we already know to be true when $c$ is odd.
\begin{conj}
For a given $m \in S_c$, either:
\begin{itemize}
\item $K$-equivalence yields two nontrivial classes in $S_n$ for all $n > c$, or
\item $K$-equivalence yields one nontrivial class in $S_n$ for all $n > c$.
\end{itemize}
\end{conj}

At this point, we are ready to focus on a particularly interesting case, when $m = \id_c$ and $c$ is odd. Since there are always two nontrivial classes in this case, one for even permutations and one for odd permutations, we would like to know their relative sizes. 

Thus from here until the end of this section, we let $m = \id_c$ and $c$ be odd.

First, two auxiliary notions:

\begin{defn}
A \emph{hit-opener} is a permutation with a $K$-hit starting at the first letter (i.e., left-most letter) and with no other $K$-hits. 
\end{defn}
\begin{defn} A \emph{hit-hugger} is a permutation with only two $K$-hits, one starting at the first letter and one starting at the $(n-c+1)$-th letter.
\end{defn}

\begin{defn}
The \emph{rot} of a permutation $x\in S_n$ is $(234\cdots n1)\circ x$. Intuitively, $\operatorname*{rot}(x)$ can be obtained by taking each letter $j$ in $x$ and replacing it with the letter in $[1,n]$ congruent to $j+1$ modulo $n$.   
\end{defn}

\begin{ex} If we apply rot to $32451$, we get $\operatorname*{rot}(32451) = 43512$.
\end{ex}

Now we can use \emph{rot} to compare class sizes when $n$ is even.

\begin{thm}\label{even n rot} For even $n$, there are the same number of odd and even non $K$-avoiders in $S_n$. 
\end{thm}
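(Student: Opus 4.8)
The plan is to exhibit $\operatorname{rot}$ as a sign-reversing bijection from the set $N_n$ of non-avoiders of $S_n$ onto itself; this immediately gives $|N_n^{even}| = |N_n^{odd}|$. Recall that $\operatorname{rot}(x) = (234\cdots n1)\circ x$, which in one-line notation replaces every letter $\ell \neq n$ of $x$ by $\ell+1$ and replaces the letter $n$ by $1$.

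First I would verify that $\operatorname{rot}$ maps $N_n$ into $N_n$. It is a bijection of $S_n$ (composition with a fixed permutation), and it carries non-avoiders to non-avoiders because it preserves the hit structure, which is exactly the observation recorded just above the statement (``because $m=\operatorname{id}$, $\operatorname{rot}$ maintains the position and existence of hits''). Spelled out: for any $c$ consecutive entries of $x$, passing to $\operatorname{rot}(x)$ either leaves their relative order unchanged (when none of them equals $n$) or replaces their order permutation $\sigma \in S_c$ by $(23\cdots c1)\circ\sigma$ (when one of them equals $n$, since that entry, the block maximum, becomes the block minimum while all the others keep their mutual order). Because the cyclic shifts of $\operatorname{id}\in S_c$ are precisely the cyclic group generated by $(23\cdots c1)$, this family of order permutations is stable under $\sigma \mapsto (23\cdots c1)\circ \sigma$, so $c$ consecutive entries of $x$ form a hit if and only if the corresponding entries of $\operatorname{rot}(x)$ do. Hence $\operatorname{rot}(N_n)\subseteq N_n$; since $N_n$ is finite and $\operatorname{rot}$ is injective, $\operatorname{rot}$ restricts to a bijection $N_n \to N_n$.

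Next I would compute signs. An $n$-cycle is a product of $n-1$ transpositions, so $\sign(234\cdots n1) = (-1)^{n-1}$, whence $\sign(\operatorname{rot}(x)) = (-1)^{n-1}\sign(x)$. For $n$ even this is $-\sign(x)$, so $\operatorname{rot}$ carries even non-avoiders to odd ones and odd non-avoiders to even ones. Therefore the bijection $\operatorname{rot}\colon N_n\to N_n$ restricts to a bijection $N_n^{even}\to N_n^{odd}$, which gives $|N_n^{even}| = |N_n^{odd}|$, as desired.

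There is no genuine obstacle here: the only step needing a moment's care is confirming that $\operatorname{rot}$ preserves hits, and that is precisely the remark preceding the statement, so once $\operatorname{rot}$ is recognized as the relevant symmetry the theorem collapses to the elementary sign computation for an $n$-cycle. I would expect the same map (and its iterates $\operatorname{rot}^t$) to reappear as the main tool for comparing the refined quantities $|{}_1N_{n-1}^{even}|$, $|{}_1N_{n-1}^{odd}|$, $|{}_1P|$, and the like, later in the section.
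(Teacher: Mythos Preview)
Your proof is correct and follows the same approach as the paper: the paper's proof simply observes that $\operatorname{rot}$ is a parity-reversing bijection from even to odd non-avoiders in $S_n$, relying on the remark immediately preceding the statement that $\operatorname{rot}$ preserves hits. Your write-up makes the sign computation and the hit-preservation explicit, but the argument is the same.
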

\begin{proof}
Because $m = \id_c$, the operation $\operatorname*{rot}$ maintains the
position and the existence of $K$-hits in a permutation even though
the letters of the $K$-hits are changed. Therefore, a bijection from
the even to the odd non $K$-avoiders in $S_n$ can be constructed by
mapping each even non $K$-avoider $\omega$ to $\operatorname*{rot}
\left( \omega \right)$. Indeed, because $n$ is even,
$\operatorname*{rot}$ changes parity.
\end{proof}

The case where $n$ is odd is much more interesting. To compare the
sizes of the two nontrivial classes, we want to compare the number of
even and odd non $K$-avoiders in $S_n$. Restricting ourselves to $c >
\frac{n}{2}$, we will go through a series of problem transformations,
ultimately bringing us to an enumeration using the Catalan number. In
particular, we will transform the problem from being
\begin{center}
about non $K$-avoiders \\
 $ \downarrow $ \\
about hit-huggers \\
 $ \downarrow $\\
about Latin reading words of $2 \times (n-c-1)$ standard Young tableaux \\
 $ \downarrow $\\
about areas above Catalan paths in a $(n-c-1)\times (n-c-1)$ grid.

\vspace{1 cm}
\end{center}

Lemma \ref{hit-ended --> non-avoiding} will perform the first
transformation, and will require the following notation.
\begin{defn}
We will denote the sets of hit-huggers, non $K$-avoiders, and
hit-openers in $S_j$ as $H_j, N_j,$ and $P_j$, respectively. A
superscript of ``odd'' or ``even'' on the right restricts the set to
elements of that parity. A subscript (respectively superscript) on the
left restricts the set to elements that begin (respectively end) with
the letter in the subscript (respectively superscript). For example,
$_1N_{n-1}^{even}$ refers to the set of even non $K$-avoiders that
begin with $1$ in $S_{n-1}$.
\end{defn}

 \begin{lem}\label{hit-ended --> non-avoiding} If $n$ is odd and $c>\frac{n}{2}$, then $|^nH_n^{even}| - |^nH_n^{odd}| = |^nN_n^{odd}| - |^nN_n^{even}|$. In other words, (the number of even hit-huggers in $S_n$ ending with $n$) $-$ (the number of odd hit-huggers in $S_n$ ending with $n$) equals (the number of odd non $K$-avoiders ending with $n$) $-$ (the number of even non $K$-avoiders ending with $n$) in $S_n$.
\end{lem}
\begin{proof}
Every non $K$-avoider $\psi$ in $S_n$ starting with $1$ is either a hit-opener or has the form $1 \rightharpoonup \pi$ for some non $K$-avoider $\pi \in S_{n-1}$. Since $1 \rightharpoonup \sigma$ has the same parity as $\sigma$ for every $\sigma \in S_{n-1}$, this yields the two equations,

\begin{align*}
|_1N_n^{odd}|& = |N_{n-1}^{odd}| + |_1P_n^{odd}|, \numberthis \label{huggereq1} \\
|_1N_n^{even}|& = |N_{n-1}^{even}| + |_1P_n^{even}|. \numberthis \label{huggereq2}
\end{align*}

Recalling by Theorem \ref{even n rot} that $|N_{n-1}^{odd}| - |N_{n-1}^{even}| = 0$, we may subtract Equation \ref{huggereq2} from Equation \ref{huggereq1} to get
\begin{align*}
|_1N_n^{odd}| - |_1N_n^{even}| = |N_{n-1}^{odd}| + |_1P_n^{odd}| - |N_{n-1}^{even}| - |_1P_n^{even}| = |_1P_n^{odd}| - |_1P_n^{even}|. \numberthis \label{huggereq3}
\end{align*}

Before using Equation \ref{huggereq3}, we must first derive Equation \ref{huggereq7}. For every hit-opener $\sigma$ in $S_{n-1}$, we know that $\sigma \leftharpoonup n$ is either a hit-opener or a hit-hugger in $S_n$. Each hit-opener and hit-hugger in $S_n$ ending in $n$ can be written as $\sigma \leftharpoonup n$ for exactly one such $\sigma$.  Since $\sigma \leftharpoonup n$ has the same parity as $\sigma$, this results in the two equations,
\begin{align*} 
|P_{n-1}^{odd}| &= |^nP_n^{odd}| + |^nH_n^{odd}|, \numberthis \label{huggereq4} \\ 
|P_{n-1}^{even}| &= |^nP_n^{even}| + |^nH_n^{even}|. \numberthis \label{huggereq5} 
\end{align*}

Subtracting Equation \ref{huggereq4} from Equation \ref{huggereq5}, we get we get
\begin{align*}
|P_{n-1}^{even}| - |P_{n-1}^{odd}| = |^nP_n^{even}| + |^nH_n^{even}| - |^nP_n^{odd}| - |^nH_n^{odd}|. \numberthis \label{huggereq6} 
\end{align*}

To simplify this equation, we observe that $\operatorname*{rot}$ bijects even and odd hit-openers in $S_{n-1}$. (See the proof of Lemma \ref{even n rot} for discussion of the bijection.) Thus the left side of Equation \ref{huggereq6} becomes zero, and we get
\begin{align*}
|^nP_n^{odd}| - |^nP_n^{even}| = |^nH_n^{even}| -  |^nH_n^{odd}|. \numberthis \label{huggereq7} 
\end{align*}

Before concluding the proof, we need two more equations. Note that $\operatorname*{rot}$ maintains parity in $S_n$ because $n$ is odd. Thus we have
\begin{align*}
|^nN_n^{odd}| - |^nN_n^{even}| &= \frac{1}{n} \cdot |N_n^{odd}| - \frac{1}{n} \cdot |N_n^{even}| = |_1N_n^{odd}| - |_1N_n^{even}|, \text{ and similarly, } \numberthis \label{huggereq8} \\
|_1P_n^{odd}| - |_1P_n^{even}| &= |^nP_n^{odd}| - |^nP_n^{even}|. \numberthis \label{huggereq9}
\end{align*}

Finally, plugging in Equation \ref{huggereq8}, followed by Equation \ref{huggereq3}, followed by Equation \ref{huggereq9}, followed by Equation \ref{huggereq7}, we get by transitivity of equality that
$$|^nN_n^{odd}| - |^nN_n^{even}| = |^nH_n^{even}| - |^nH_n^{odd}|.$$

\end{proof}

Next we transform the problem from being about hit-huggers to being about Latin reading words of Young tableaux. 

\begin{conv}
All Young tableaux mentioned are Standard Young tableaux in the English notation.
\end{conv}

\begin{lem}\label{hit-ended-->Young tableaux} Let $n$ be odd and  $c>\frac{n}{2}$. There is a bijection between hit-huggers in $S_n$ ending in $n$ and $2 \times (n-c-1)$ Young tableaux. The parity of the Latin reading word (the word obtained by reading the entries of a Young tableau from left to right like a paragraph) of a $2 \times (n-c-1)$ Young tableau is the same as that of its corresponding hit-hugger in $S_n$.  
\end{lem}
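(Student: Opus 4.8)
The plan is to show that a hit-hugger in $S_n$ ending in $n$ is forced into a rigid normal form, read off from it a pair of interleaved increasing sequences, and observe that these are exactly the two rows of a $2\times(n-c-1)$ standard Young tableau.

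\emph{Normal form.} Since $m=\operatorname{id}$, a hit is a block of $c$ consecutive entries forming a cyclic rotation of an increasing sequence; such a block is increasing except for at most one descent, and its last entry is its largest entry only when the block is actually increasing. In a hit-hugger $w$ ending in $n$, the last block $w_{n-c+1}\cdots w_n$ ends in the global maximum $n$, so it is increasing. Because $c>n/2$, the first and last blocks share the positions $n-c+1,\dots,c$, so these positions are increasing; hence the first block either is itself increasing (forcing $w=\operatorname{id}$, which is not a hit-hugger since $n\ge c+2$) or has its unique descent at some position $t$ with $t\le n-c$, together with $w_c<w_1$. Thus $w$ splits into two increasing runs $w_1<\dots<w_t$ and $w_{t+1}<\dots<w_n$, and since $w_{t+1}<\dots<w_c<w_1$ while $w_{c+1},\dots,w_n$ all exceed $w_c$, the $s:=c-t$ entries occupying positions $t+1,\dots,c$ must be $1,2,\dots,s$ in order.

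\emph{Rigidity.} Next I would use that $w$ has no hit beginning strictly between position $1$ and position $n-c+1$. The window beginning at position $t+1$ reads $1,2,\dots,s,w_{c+1},\dots$, which is increasing, hence a hit; so that window must \emph{be} the last block, i.e.\ $t=n-c$ and $s=2c-n$. Therefore $w=(a_1,\dots,a_t,\,1,2,\dots,s,\,b_1,\dots,b_t)$ with $a_1<\dots<a_t$, $b_1<\dots<b_t$, and $\{a_i\}\cup\{b_i\}=\{s+1,\dots,n\}$, a set of size $2t$. For $2\le j\le t$, the window beginning at position $j$ reads $a_j,\dots,a_t,\,1,\dots,s,\,b_1,\dots,b_{j-1}$; upon standardization the values $1,\dots,s$ become $1,\dots,s$ and the remaining $t$ entries become $s+1,\dots,c$ in relative order, so the result is a cyclic rotation of $1\cdots c$ exactly when the $a$-part occupies the top $t-j+1$ of those values, i.e.\ exactly when $a_j>b_{j-1}$. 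Hence ``$w$ has no other hit'' is equivalent to $a_{i+1}<b_i$ for $1\le i\le t-1$, and conversely every $w$ of this shape obeying these inequalities is a hit-hugger ending in $n$. I expect this step — excluding every accidental intermediate hit and pinning each intermediate window's standardization to a single candidate rotation — to be the main obstacle.

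\emph{Bijection and parity.} Put $\tilde a_i=a_i-s$, $\tilde b_i=b_i-s$, so $\{\tilde a_i\},\{\tilde b_i\}$ partition $\{1,\dots,2t\}$; the inequalities force $\tilde a_1=1$ and $\tilde b_t=2t$. Deleting these two forced entries and subtracting $1$ turns $(\tilde a_2,\dots,\tilde a_t)$ and $(\tilde b_1,\dots,\tilde b_{t-1})$ into increasing sequences partitioning $\{1,\dots,2t-2\}$ with the ballot inequalities $(\tilde a_{i+1}-1)<(\tilde b_i-1)$; placing the first as the top row and the second as the bottom row of a $2\times(t-1)=2\times(n-c-1)$ rectangle gives a standard Young tableau, and every such tableau arises from exactly one hit-hugger. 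For the parity claim I would count inversions of $w$ directly: the block $1,\dots,s$ in positions $t+1,\dots,c$ contributes $ts$ inversions with the $a$'s, none with the $b$'s, and none internally, so the number of inversions of $w$ equals $ts$ plus the number of inversions of the word $\tilde a_1\cdots\tilde a_t\tilde b_1\cdots\tilde b_t$; since that word begins with $\tilde a_1=1$ and ends with $\tilde b_t=2t$, deleting these leaves a word whose inversion count is exactly that of the tableau's row reading word. Finally $t=n-c$ is even (odd minus odd), so $ts$ is even and $w$ has the same parity as the reading word.
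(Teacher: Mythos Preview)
Your argument is correct and tracks the paper's proof closely. Both identify the forced shape $a_1\cdots a_{n-c}\,1\cdots(2c-n)\,b_1\cdots b_{n-c}$ with increasing $a$'s and $b$'s, derive the column inequalities $a_{j}<b_{j-1}$ from the absence of intermediate hits, and compute parity by separating out the $(n-c)(2c-n)$ inversions contributed by the central block $1,\dots,2c-n$, which is even since $n-c$ is even. The only cosmetic differences are: you locate the descent by exhibiting an increasing window at position $t+1$ and forcing $t=n-c$, while the paper instead uses the non-hit window at position $n-c$ to force $a_{n-c}>a_{n-c+1}$; and you normalize twice (subtract $s$, strip $\tilde a_1=1$ and $\tilde b_t=2t$, subtract $1$) to land on entries $\{1,\dots,2(n-c-1)\}$, whereas the paper simply keeps the entries in $\{2c-n+2,\dots,n-1\}$. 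The step you flagged as the main obstacle --- showing each intermediate window is a rotation of $1\cdots c$ iff $a_j>b_{j-1}$ --- is exactly as clean as you sketched, since each such window has a single descent at $a_t\to 1$ and the cyclic-rotation condition reduces to comparing $\min\{a_j,\dots,a_t\}=a_j$ with $\max\{1,\dots,s,b_1,\dots,b_{j-1}\}=b_{j-1}$.
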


\begin{proof}
Let $a$ be an arbitrary hit-hugger ending with $n$. We have $$a = a_1a_2a_3\cdots a_{n-c}a_{n-c+1} \cdots a_c a_{c+1}a_{c+2}\cdots a_{n-1}a_n.$$ The last $c$ letters of $a$ strictly increase from left to right since $a_n = n$ is the largest letter, so 
\begin{align*}
a_{n-c+1}< \cdots <a_c <a_{c+1} <a_{c+2}< \cdots <a_{n-1} <a_n. \numberthis \label{letterorder1}
\end{align*} 
In order for letters $a_{n-c}$ through $a_{n-1}$ to be $K$-avoiding, we must have $a_{n-c} > a_{n-c+1}$. However, we know that, $a_1a_2a_3\cdots a_c$ has to be a $K$-hit, implying that 
\begin{align*}
a_{n-c+1}<a_{n-c+2}<\cdots <a_c<a_1<a_2<a_3<\ldots <a_{n-c}. \numberthis \label{letterorder2}
\end{align*}

In order for $a_2a_3\cdots a_{n-c}a_{n-c+1} \cdots a_ca_{c+1}$ to be $K$-avoiding, we must have $a_2<a_{c+1}$. Similarly, we get all of the inequalities,
\begin{align*}
a_2 < a_{c+1}, \ \ a_3<a_{c+2}, \ \  a_4<a_{c+3}, \ \ \ldots , \ \ a_{n-c}<a_{n-1}. \numberthis \label{letterorder3} 
\end{align*}

Observe that Equations \ref{letterorder1}, \ref{letterorder2}, and \ref{letterorder3} are sufficient to prove that $a$ is a hit-hugger. Indeed, Equations \ref{letterorder1} and \ref{letterorder2} imply that $a$ contains a hit beginning at its first letter and a hit ending at its final letter. Then Equation \ref{letterorder3} insures that $a$ contains no additional hits. Thus for $a$ to be a hit-hugger ending with $n$ is exactly equivalent to $a$ satisfying the three equations. 

If we combine together Equations \ref{letterorder1}, \ref{letterorder2}, and \ref{letterorder3}, we get

\begin{align*}
a_{n-c+1} &<& \cdots &<& a_c &<& a_1  &< & a_2   & < &a_3   & < &\cdots &<&  a_{n-c} & \\
&&&&&&&&\wedge    &  & \wedge   &  &  \wedge      & &  \wedge & \\ 
&& a_{n-c+1} &<& \cdots &<& a_c &<  &a_{c+1}& < &a_{c+2} &<& \cdots &<&   a_{n-1} &< a_n.
\end{align*}

Observe that the values for $a_{n-c+1}, \ldots, a_c$ and for $a_1$ are uniquely determined. Indeed, they are less than the other letters and are strictly ordered with respect to each other. Noting that $a_n = n$, the values of the not yet determined letters are constrained only by

\begin{align*}
a_2       & < & a_3      & < &      \cdots &<&  a_{n-c}  \\
\wedge    &   & \wedge   &  &  \wedge      & &  \wedge \\ 
a_{c+1}    & < & a_{c+2}    &<&        \cdots &<&   a_{n-1}.
\end{align*}

Thus we have found our desired bijection between hit-huggers ending with $n$ and $2 \times (n-c-1)$ Young tableaux.

It remains to show the claim about parity. Since $\left(n-c\right)\left(2c-n\right)$ is even, sliding the subsequence $a_{n-c+1}\cdots a_{c}$ to be at the start of $a$ will yield a permutation of the same parity as $a$. In this new permutation, \\ $a_{n-c+1}\cdots a_ca_1a_2a_3\cdots a_{n-c}a_{c+1}a_{c+2}\cdots a_{n-1}a_n$, all inversions are within the letters $a_2$ through $a_{n-1}$, since $a_{n-c+1}\cdots a_{c}$ are the smallest letters in increasing order and $a_n$ is the largest letter. Furthermore, the contiguous subword from $a_2$ to $a_{n-1}$ is exactly the Latin reading word of the Young tableau corresponding to $a$. Thus, the parity of the Latin reading word of a $2 \times (n-c-1)$ Young tableau is the same as that of its corresponding hit-hugger in $S_n$.
\end{proof}

Finally, we transform the problem to being about areas above Catalan paths on a grid.

\begin{defn}
A \emph{Catalan path} on a $j \times j$ grid is a path from the top left corner to the grid to the bottom right corner comprising only steps to the right and steps down. Moreover, it is restricted to stay on or above the main diagonal at all times.
\end{defn}

\begin{defn}
The \emph{area} of a Catalan path is the area above it in the grid. A \emph{odd path} is a Catalan path with odd area above it. An \emph{even path} is one  with even area above it.
\end{defn}

\begin{ex} Figure \ref{exampleeven} shows an example of an even path.
\end{ex}

Let $C_k$ denote the number of Catalan paths on a $k \times k$ grid. Recall that $C_k = \frac{1}{k+1}{2k\choose k}$ is the $k$th Catalan number.

\begin{figure}
\includegraphics[scale=0.43]{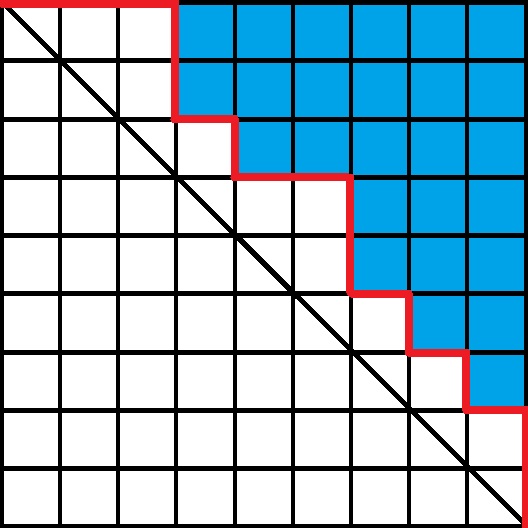} \\
\caption{An example Catalan path on a $9 \times 9$ grid. The shaded region is the area. Because the area is 26, the path is even.} 
\label{exampleeven}
\end{figure}

The final transformation of the problem:
\begin{lem}\label{tableaux to lattice paths} For odd $n$ there is a bijection between $2 \times (n-c-1)$ Young tableaux and Catalan paths inside a $(n-c-1)\times (n-c-1)$ grid. The parity of the area above the Catalan path equals the parity of the Latin reading word of the corresponding Young tableau. 
\end{lem}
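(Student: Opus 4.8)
The plan is to realize the claimed bijection as the classical correspondence between standard Young tableaux of shape $2\times N$, where $N:=n-c-1$, and monotone lattice paths from corner to corner of an $N\times N$ grid that stay weakly above the main diagonal, and then to prove the stronger fact that under this correspondence the area above the path \emph{equals} the number of inversions of the Latin reading word; the parity statement is then immediate. Concretely, given a $2\times N$ tableau $T$ with top row $a_1<a_2<\cdots<a_N$ and bottom row $b_1<b_2<\cdots<b_N$, read the values $1,2,\ldots,2N$ in increasing order and record, for each value, a North step if it lies in the top row of $T$ and an East step if it lies in the bottom row. This produces a path from $(0,0)$ to $(N,N)$. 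The condition that $T$ has increasing columns, i.e.\ $a_i<b_i$ for all $i$, is equivalent to the ballot condition that every prefix of the step word has at least as many North steps as East steps, which is exactly the statement that the path stays weakly above $y=x$. The inverse map places the positions (among $1,\ldots,2N$) of the North steps, in increasing order, into the top row, and the positions of the East steps into the bottom row; the ballot condition forces the columns to increase, so this is a well-defined two-sided inverse. (As a check, both sides are counted by $C_N$.)

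Next I would compute $\operatorname{inv}(w)$ for the Latin reading word $w=a_1a_2\cdots a_N b_1b_2\cdots b_N$. Since each row of $T$ is increasing, every inversion of $w$ consists of a top-row entry $a_i$ together with a strictly later bottom-row entry $b_j$ with $a_i>b_j$. Among the values $1,2,\ldots,b_j$, exactly $j$ lie in the bottom row, hence $b_j-j$ lie in the top row, so exactly $N-(b_j-j)$ top-row entries exceed $b_j$. Summing over $j$ yields $\operatorname{inv}(w)=\sum_{j=1}^N\bigl(N-b_j+j\bigr)$.

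Finally I would compute the area above the path inside the square $[0,N]^2$. In the vertical strip $x\in[j-1,j]$ the path makes exactly one East step, at some height $g_j$; just before taking that step the path has used $j-1$ East steps and, among the values $1,\ldots,b_j-1$, exactly $(b_j-1)-(j-1)=b_j-j$ top-row values, so $g_j=b_j-j$. Hence the area above the path is $\sum_{j=1}^N(N-g_j)=\sum_{j=1}^N(N-b_j+j)=\operatorname{inv}(w)$, and in particular the two quantities have the same parity.

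The argument is entirely bookkeeping and has no serious obstacle; the only points requiring care are the choice of conventions — attaching top-row values to North steps (not East steps) and orienting the grid so that ``weakly above the main diagonal'' is the correct constraint — and the verification of the identity $g_j=b_j-j$, which is precisely what makes the area and the inversion count agree on the nose rather than merely up to a constant.
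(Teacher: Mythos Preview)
Your proof is correct and follows essentially the same approach as the paper: the bijection is the standard one (reading values in increasing order and recording a step according to row), and both arguments in fact establish the stronger statement that the area above the path equals the number of inversions of the Latin reading word, not merely that the parities agree. The only cosmetic differences are orientation (the paper sends top-row entries to rightward steps and bottom-row entries to downward steps, whereas you send them to North and East respectively) and presentation (the paper exhibits a direct bijection between inversion pairs and unit squares above the path, while you compute both quantities as $\sum_j (N-b_j+j)$); these amount to the same argument.
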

\begin{proof}
We use a standard bijective map from $(n-c-1) \times (n-c-1)$ Catalan paths to $2 \times (n-c-1)$ Young tableaux, an example of which appears in Figure \ref{examplebijection}. Given any $(n-c-1) \times (n-c-1)$ Catalan path, traverse the steps in the path from its beginning to end. At the $i$th step, add $i$ to the tableaux we are building as follows. If the $i$th step is a down step, add it to the bottom row. Otherwise, add it to the top row. The inverse of the bijection is easy to construct. Given any $2 \times (n-c-1)$ Young tableau, start in the top left corner of the grid. Traverse the Young tableau in order of increasing entries. For every number in the traversal: if it is in the top row, draw a step horizontally to the right in the grid; if it is in the bottom row, draw a step vertically down. At any given point in the resulting path, the number of down steps is bounded by the number of steps to the right so far in the path. Thus the resulting path is Catalan.

We will now look at parity, showing that the area of a Catalan path is the same as the inversion number of the Latin reading word of the corresponding Young tableau. Let $t$ be a $2 \times (n-c-1)$ Young tableau. For every pair of numbers $i<j$ where $i$ is in the bottom row of $t$ and $j$ is in the top row of $t$, there is an inversion in the Latin reading word of $t$. (These are exactly the inversions.) At the same time, the Catalan path corresponding to $t$ has a unit square of area in the same row and column as the pair of steps that correspond to $i$ and $j$ (because the step corresponding with $i$ is vertical, the step corresponding with $j$ is horizontal, and the first appears before the second in the path). It is straightforward to see that this is a bijection between squares in the area region of the Catalan path and inversions in the Latin reading word of $t$.
\end{proof}

\begin{figure}
\begin{tabular}{m{7 cm} m{2 cm} m{5 cm}}
\large
\begin{tabular}{l l l l l l l l l}
1 &2 &3 &6 &8 &9 &12 &14 &16 \\
4 &5 &7 &10 &11 &13 &15 &17 &18
\end{tabular}
\hspace{-1 cm}
& \LARGE $\longleftrightarrow$  

& \includegraphics[scale=0.35]{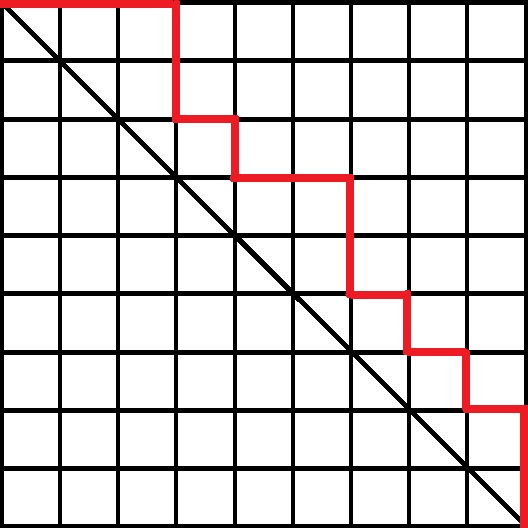} \\
\end{tabular}
\caption{Example of bijection from $2 \times 9$ Young tableaux to $9 \times 9$ Catalan paths.}
\label{examplebijection}
\end{figure}

\begin{defn}
A \emph{leg} of a Catalan path is a maximal sequence of consecutive steps in the same direction.
\end{defn}

\begin{ex} In Figure \ref{exampleeven}, the legs of the path are of lengths 3, 2, 1, 1, 2, 2, 1, 1, 1, 1, 1, and 2 respectively.
\end{ex}

With the problem transformations complete, we obtain an enumerative result:
\begin{lem}\label{diff even odd lattice paths} For odd $n$, there are $C_{\frac{n-c-2}{2}}$ more lattice paths inside a $(n-c-1)\times (n-c-1)$ grid with even than with odd area above the path.
\end{lem}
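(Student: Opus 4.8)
The quantity we want to compute is the signed count $\sum_P (-1)^{\operatorname{area}(P)}$ over lattice paths $P$ inside an $N\times N$ grid staying weakly above the diagonal, where $N = n-c-1$ is even (since $n$ is odd and we will see $c$ must have the right parity for the statement to be nonvacuous). The plan is to show this signed sum equals $(-1)^{?}C_{N/2}$ up to sign — and since the claim asks for "more even than odd", to nail the sign as well. I would first reformulate: group the Dyck-type paths by some involution-friendly statistic, so that paths not fixed by the involution cancel in pairs (their areas differing by an odd number), and the fixed points contribute a clean Catalan count.

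The natural involution to try is one that toggles a single unit cell of the area — for instance, locating the first "inner corner" (a down-step immediately followed by a right-step, i.e., a valley) that can be flipped to an outer corner (right-step then down-step) or vice versa, changing the enclosed area by exactly $1$ and hence flipping parity. The subtlety is staying weakly above the diagonal: flipping a valley into a peak always stays above the diagonal, but flipping a peak into a valley may push the path below it. So I would define the involution only on paths that have at least one "free" corner to toggle, and identify the fixed points as the paths with no such corner. I expect the fixed points to be exactly the "staircase-like" paths — those of the form $(RD)^a (RR\cdots R)(DD\cdots D)$ or more precisely paths whose shape is forced, and counting these should give $C_{N/2}$ when $N$ is even (the staircase path of a $2\times 2$ block structure, i.e., Dyck paths of semilength $N/2$ drawn in the $N\times N$ grid via doubling). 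The area parity of each such fixed path should be constant (even), yielding the stated surplus of $C_{(n-c-2)/2} = C_{N/2 - 1}$...

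I should double-check the index: the statement says $C_{\frac{n-c-2}{2}} = C_{(N-1)/2}$, which is only an integer when $N$ is odd — but $N = n-c-1$, and for a $2\times(n-c-1)$ tableau to exist alongside the earlier lemmas we are in the regime $c > n/2$, so $N = n-c-1 < n/2 - 1$; and parity of $N$ is parity of $n-c-1 = (\text{odd}) - c - 1 = -c$, so $N$ is even iff $c$ is even. Thus when $c$ is even, $N$ is even and $(N-2)/2 = (n-c-2)/2$ is the right index $C_{N/2-1}$; the phrasing in the lemma implicitly assumes this parity of $c$. So I would state at the outset that $N := n-c-1$ is even, set $k := N/2$, and prove $\sum_P (-1)^{\operatorname{area}(P)} = C_{k-1}$.

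The cleanest route, avoiding a delicate geometric involution, is probably generating functions: the Carlitz $q$-Catalan number $C_N(q) = \sum_P q^{\operatorname{area}(P)}$ satisfies the functional recurrence $C_N(q) = \sum_{j} q^{j} C_{j}(q) C_{N-1-j}(q)$ (decomposing at the first return to the diagonal), so I would set $q = -1$ and solve the resulting recurrence $C_N(-1) = \sum_j (-1)^j C_j(-1) C_{N-1-j}(-1)$ with $C_0(-1) = 1$. Computing the first several values ($C_0(-1)=1, C_1(-1)=1, C_2(-1)=0, C_3(-1)=-1, C_4(-1)=0, C_5(-1)=0,\dots$ — I would verify these against the known identity $C_N(-1) = \prod$ or the OEIS pattern) should reveal $C_{2k}(-1) = C_{k-1}$ (with appropriate sign) and $C_{2k+1}(-1) = 0$, which one can then prove by a short induction on the recurrence, splitting the convolution into even/odd index parts and using the Catalan recurrence $C_k = \sum_i C_i C_{k-1-i}$. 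The main obstacle — and the place I would spend the most care — is pinning down the sign of $C_{2k}(-1)$ (whether the surplus is genuinely of even over odd area, as claimed, rather than the reverse), which requires tracking signs honestly through the induction base cases; a clean way to sidestep sign ambiguity would be to exhibit the explicit area-halving bijection between even-area paths that are "extra" and the set of Dyck paths of semilength $k-1$ (e.g. via the $2\times 2$ block / diagonal-lattice-path doubling map), thereby getting both the count and the direction of the inequality simultaneously.
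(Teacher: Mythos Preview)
Your proposal has a genuine gap stemming from a parity error. The paper's standing convention in this section is that $c$ is \emph{odd}; together with $n$ odd this makes $N := n-c-1$ \emph{odd}, not even. You also slipped arithmetically: since $N = n-c-1$ we have $n-c-2 = N-1$, not $N-2$, so the target index is $C_{(N-1)/2}$, an integer precisely because $N$ is odd. Your set-up (``state at the outset that $N$ is even, set $k := N/2$, and prove $\sum_P (-1)^{\operatorname{area}(P)} = C_{k-1}$'') is therefore aimed at the wrong case.

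There is a second, independent problem with the generating-function route. The Carlitz recurrence you quote, $C_N(q) = \sum_j q^{j}\, C_j(q)\, C_{N-1-j}(q)$, tracks the area between the path and the \emph{diagonal}, whereas the paper's ``area above the path'' is the area between the path and the \emph{top boundary} of the grid. These two statistics sum to $\binom{N}{2}$, so the two signed counts differ by the factor $(-1)^{\binom{N}{2}}$; you cannot ignore this when pinning down the direction of the inequality. Concretely, with your recurrence one finds $C_{2k}(-1)=0$ for all $k\ge 1$ and $C_{2k+1}(-1)=(-1)^k C_k$ --- not $C_{2k}(-1)=C_{k-1}$ as you conjectured --- and it is only after multiplying by $(-1)^{\binom{2k+1}{2}}=(-1)^k$ that one recovers the claimed excess $+C_k$ of even-area over odd-area paths. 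The $q=-1$ approach is salvageable, but both the parity of $N$ and the area normalization must be corrected before the induction can go through, and the sign bookkeeping you flagged as ``the main obstacle'' is exactly where your sketch currently fails.

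For comparison, the paper's proof is a direct sign-reversing involution, closer in spirit to your first idea but not the corner-toggle you sketched: one marks the cells lying in even columns and odd rows, finds the \emph{first} marked cell that the path touches on two sides, and reroutes the path across the other two sides of that cell, changing the area above by exactly $1$. The fixed points are the paths all of whose legs, except possibly the first and last, have even length; these all have even area above them, and after deleting the first column and last row and halving both dimensions they biject with lattice paths in a $\frac{N-1}{2}\times\frac{N-1}{2}$ grid, giving the count $C_{(N-1)/2}=C_{(n-c-2)/2}$ and the correct sign simultaneously.
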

\begin{figure}
\begin{center}
\begin{tabular}{ccc}
\includegraphics[scale=0.25]{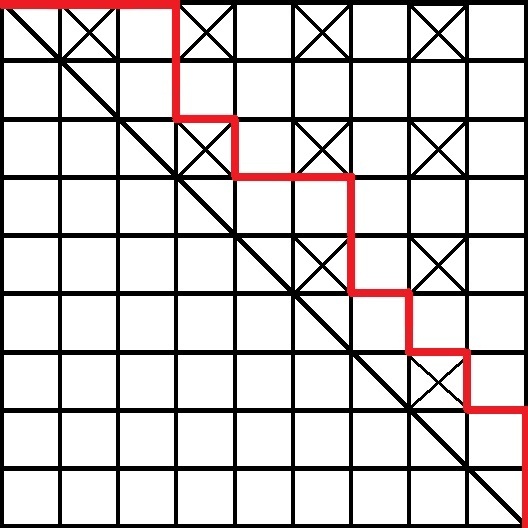} &
\includegraphics[scale=0.25]{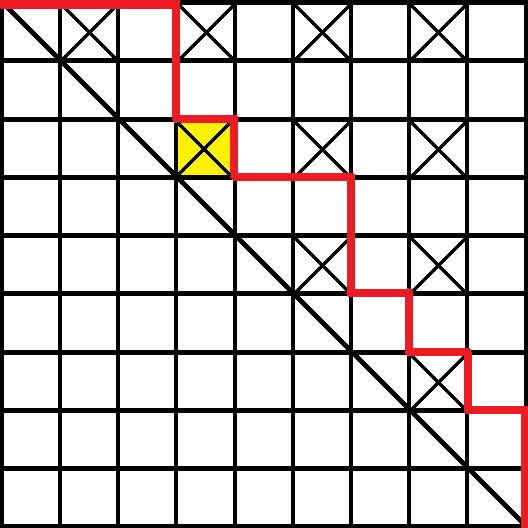} &
\includegraphics[scale=0.25]{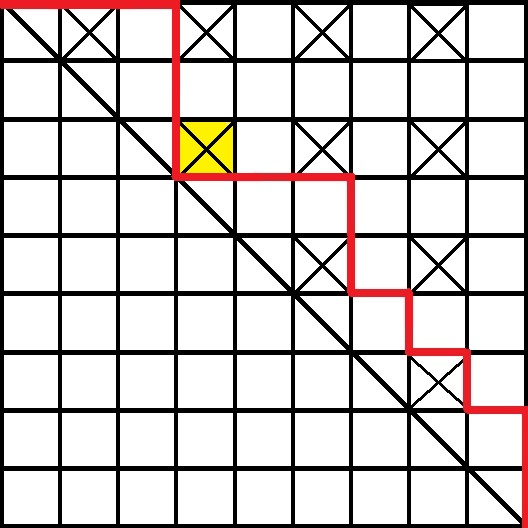} \\
(a) & (b) & (c) \\
\includegraphics[scale=0.25]{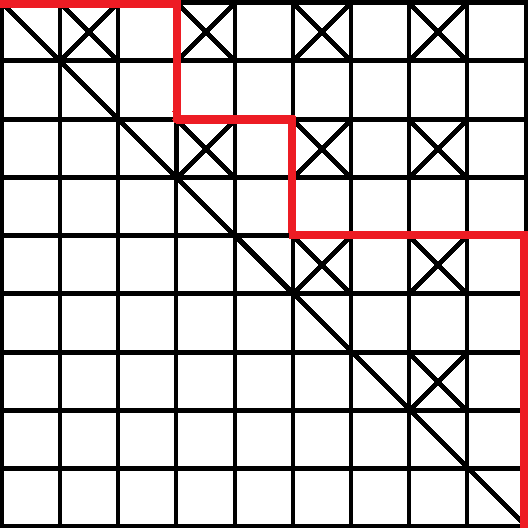} &
\includegraphics[scale=0.25]{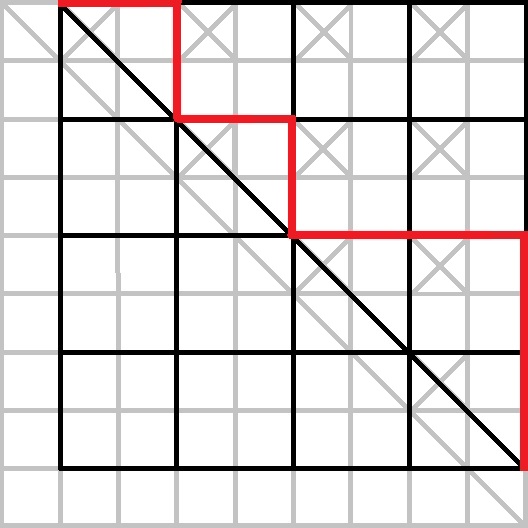} & \\
(d) & (e) & \\
\end{tabular}
\end{center}
\caption{Lattice paths in a grid demonstrating the technique in the proof of Lemma \ref{diff even odd lattice paths}.}
\label{latticepaths}
\end{figure}


\begin{proof}
Given a Catalan path $l$ on a $(n-c-1)\times (n-c-1)$ grid, draw X's
inside the squares that are in even columns and odd rows. An example
is shown in Figure~\ref{latticepaths}(a).

Take the first time the path touches two edges of the same square with
an X in it (highlighted in Figure~\ref{latticepaths}(b)) and draw the
path $l'$ that is the same as $l$ except that it touches the other two
edges of that square (shown in Figure~\ref{latticepaths}(c)). Note
that $l'$ also gets mapped to $l$, and that the area above $l$ has the
opposite parity as the area above $l'$.


Thus, there are the same number of even and odd Catalan paths that
touch at least one square with an X on two edges. Consider all of the
paths that do not touch any squares with an X on two edges (for
example, Figure~\ref{latticepaths}(d)). Notice that these paths must
not have any odd-length legs except for the first and last legs and
that there is an even area above each such path.

This means that the number of these paths is equal to the difference
between the number of odd and even paths. By deleting the first column
and last row, and shrinking the dimensions of the grid each by a
factor of two (shown in Figure~\ref{latticepaths}(e)), we see that we
only have to count the number of Catalan paths inside a
$\frac{n-c-2}{2}\times \frac{n-c-2}{2}$ grid, which is just
$C_{\frac{n-c-2}{2}}$.
\end{proof}

\begin{rem}
Note that a result equivalent to Lemma \ref{diff even odd lattice
  paths} was proven by \cite{SS}. However, we choose to provide our
proof because it is very different than the previous one.
\end{rem}


We now restate Lemma \ref{diff even odd lattice paths} in the context
of our original problem.
\begin{thm}
\label{difference in parity of odd n} Let $n$ and $c$ be odd, $c>\frac{n}{2}$, and $m = \id_c$. Let $K$ contain the cyclic shifts of $m$. Then $S_n$ has two nontrivial classes under $K$-equivalence. Moreover, the equivalence class containing odd permutations contains $nC_{\frac{n-c-2}{2}}$ more permutations than the the one containing even permutations. 
\end{thm}
\begin{proof}
 By Lemma \ref{hit-ended --> non-avoiding}, $|^nH_n^{even}| - |^nH_n^{odd}| = |^nN_n^{odd}| - |^nN_n^{even}|$. Therefore, due to $\operatorname*{rot}$ preserving the parity of a permutation, it suffices to show that there are $C_{\frac{n-c-2}{2}}$ more even than odd hit-huggers ending with the letter $n$. By Lemma \ref{hit-ended-->Young tableaux}, the parity of the Latin reading word of a $2 \times (n-c-1)$ Young tableau is the same as that of a corresponding hit-hugger ending with $n$ in $S_n$. By Lemma \ref{tableaux to lattice paths}, this is the same as the parity of the area above a corresponding Catalan path inside a $(n-c-1)\times (n-c-1)$ grid. Since by Lemma \ref{diff even odd lattice paths}, there are $C_{\frac{n-c-2}{2}}$ more even than odd area-ed Catalan paths inside a $(n-c-1)\times (n-c-1)$ grid, we are done.
\end{proof}

\section{$\mathbf{\{ x \mid x\in S_{c+1},\ x_1=1 \}}$-Equivalence}\label{subsecstartwith1}

Until now, we have used $c$ to denote the size of the permutations in
the replacement partition we are considering. We now diverge from this
convention and assign $c$ to a value of one smaller. Although this may
seem like a strange choice, it will simplify formulas throughout.

In this section, we study the pattern-replacement equivalence
determined by the replacement set comprising every permutation in
$S_{c+1}$ which starts with $1$. For example, when $c=2$, this is the
$\{123, 132\}$-equivalence, which \cite{LPRW} previously
considered but was unable to enumerate. We note, however, that
\cite{LPRW} was able to count the size of the class containing the
identity under $\{123, 132\}$-equivalence, a result which we extend to
the general case in Corollary \ref{coridentityclass1} and Theorem
\ref{thmcountsquished}.

In this section, we will sometimes refer to the \emph{direct
  product} of the equivalence class of a word $x$ with the
equivalence class of a word $y$.
\begin{defn}
The \emph{direct product} of the equivalence class of a word $x$
with the equivalence class of a word $y$ is the set of
concatenations of elements from the first with elements from the
second.
\end{defn}

For a given $c$, we count the equivalence classes in $S_n$ under
$\{ x \mid x\in S_{c+1},\ x_1=1 \}$-equivalence (Theorem
\ref{thmgetf}). In the process, we characterize the equivalence class
containing the identity permutation (Corollary
\ref{coridentityclass1}) and find a formula for its size (Theorem
\ref{thmcountsquished}). Then, for a given $w \in S_n$, we
characterize its equivalence class as the direct product of
equivalence classes containing the identity in $S_j$ for various $j$
(Remark \ref{remcount1}). Combining our results together, we have a
recipe for characterizing and finding the size of any equivalence
class (Remark \ref{remcount1}).

We begin by defining a special property of permutations.

\begin{defn}
We call a permutation $k$\emph{-squished} if for each $1 \le j \le k$,
$j$ is in the left-most $c(j-1)+1$ letters.
\end{defn}

Observe for $k \ge 1$, every $k$-squished permutation contains 1 in
the first position.

The importance of $k$-squished-ness will stem from the fact that it is
an invariant:
\begin{lem}\label{lemsquishclosure}
Let $x, y \in S_n$ such that $x \equiv y$ under $\{x \mid x \in
S_{c+1}, \ x_1 = 1\}$-equivalence. If $x$ is $k$-squished for a given
$k$, then so is $y$.
\end{lem} 
\begin{proof}
This is straightforward.
\end{proof}

It turns out that we can characterize the permutations equivalent to
the identity using $k$-squished-ness. This extends a result of
\cite{LPRW} for the case of $c=2$.
\begin{lem}\label{squishinglem}
Let $w$ be a $k$-squished permutation where $c+1 \le n \le
ck+1$. Then, $w$ is equivalent to the identity under $\{ x \mid x\in
S_{c+1},\ x_1=1 \}$-equivalence.
\end{lem}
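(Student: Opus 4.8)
The plan is to induct on $n$, or equivalently on $k$, reducing a $k$-squished permutation to the identity by a sequence of transformations. The key structural observation is that a transformation $p \to q$ here (with both $p, q \in S_{c+1}$ starting with $1$) lets us freely permute the $c$ letters immediately following any occurrence of a letter that is smaller than all $c$ letters to its right within a window of length $c+1$. In particular, if the letter $1$ sits at position $i$ in $w$, then the $c$ letters in positions $i+1, \ldots, i+c$ can be rearranged arbitrarily, provided $i + c \le n$. Since $w$ is $k$-squished and $n \le ck+1$, the letter $1$ occupies one of the first $c(1-1)+1 = 1$ positions, so $1 = w_1$; then the window $w_1 w_2 \cdots w_{c+1}$ is a valid hit, and we may sort $w_2 \cdots w_{c+1}$ however we like. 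The first step, then, is to use this freedom to bring the letter $2$ into position $2$.

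Once $2$ is in position $2$, I would argue that the suffix $w_2 w_3 \cdots w_n$ (reading $w_1 = 1$ off the front) is itself, after standardization, a $(k-1)$-squished permutation in $S_{n-1}$: the condition that $j$ lies in the first $c(j-1)+1$ letters of $w$, for $2 \le j \le k$, translates to $j-1$ lying in the first $c((j-1)-1)+1$ letters of the standardized suffix, because we have removed exactly one letter (the $1$) from the front. Moreover $n-1 \le ck+1 - 1 = c(k-1) + 1 + (c-1) \cdots$ — here one must check the range hypothesis still applies, i.e. $n - 1 \le c(k-1)+1$; this holds precisely when $n \le ck - c + 2$, which is slightly stronger than $n \le ck+1$, so the cleaner route is to not decrement $k$ but instead induct directly on $n$ with $k$ fixed, peeling off the front letter $1$ and invoking the inductive hypothesis on $S_{n-1}$ with the same $k$ (noting $n-1$ still satisfies $c+1 \le n-1$ or else handling the base case $n = c+1$ directly, where $w$ is a single hit starting with $1$ hence equivalent to the identity hit $12\cdots(c+1)$). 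Using the remark that $w \equiv v$ implies $i \rightharpoonup w \equiv i \rightharpoonup v$, the equivalence of the standardized suffix to the identity of $S_{n-1}$ lifts to $1 \rightharpoonup(\text{suffix}) \equiv 1 \rightharpoonup \operatorname{id}_{n-1} = \operatorname{id}_n$.

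The base case is $n = c+1$: here being $1$-squished forces nothing beyond $n \le ck+1$, but any non-identity permutation $w \in S_{c+1}$ that is $k$-squished with $k \ge 1$ has... actually the cleanest base case is just $n = c+1$ together with the observation that \emph{every} permutation of $S_{c+1}$ starting with $1$ is a hit, hence equivalent to $12\cdots(c+1)$; and if $w$ does not start with $1$ we first need $1$ to be movable to the front, which is exactly where $k$-squishedness for $k \ge 2$ and the inductive machinery come in — so the induction should really be set up so that the very first move always sorts the letters after the (front-most, by squishedness) occurrence of $1$ to pull $2$ adjacent, then recurse. The main obstacle I anticipate is bookkeeping the squishedness condition through the passage to the suffix and making sure the inequality $c+1 \le n \le ck+1$ is correctly maintained (or correctly weakened) under the inductive step; this is where an off-by-one in the definition of $c$-versus-$(c+1)$-length hits could easily creep in, and it deserves careful attention rather than the hand-waving above.
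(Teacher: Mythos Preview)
Your inductive approach has a genuine gap, and it is precisely the one you flag at the end without resolving: peeling off the leading $1$ and recursing on the standardized suffix does \emph{not} preserve $k$-squishedness. In the suffix $w_2\cdots w_n$, the letter that standardizes to $i$ is the original letter $i+1$, which is only guaranteed to sit in position $\le c\,i+1$ of $w$, hence in position $\le c\,i$ of the suffix. But $k$-squishedness requires position $\le c(i-1)+1 = ci - c + 1$, which is strictly smaller once $c\ge 2$. Concretely, take $c=2$, $k=2$, $n=5$, and $w = 1\,4\,2\,5\,3$ (which is $2$-squished). After your first move you reach $1\,2\,4\,5\,3$; the standardized suffix is $1\,3\,4\,2 \in S_4$, which is \emph{not} $2$-squished, and in fact its equivalence class in $S_4$ is $\{1342,\,1432\}$, not containing the identity. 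So the recursion simply does not go through, with either $k$ or $k-1$.

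The paper avoids this by applying the inductive hypothesis to the \emph{first} $n-1$ letters rather than the last $n-1$: since $w_n > k$ (all letters $\le k$ lie strictly left of position $n$), deleting position $n$ leaves the values $1,\ldots,k$ untouched under standardization, so the prefix genuinely is $k$-squished in $S_{n-1}$. The inductive hypothesis is then used not to reach the identity on the prefix but to reach a specific $k$-squished arrangement with the small letters spread out at positions $c(j-1)+1$; a hit anchored at $r-1$ then slides $r$ into position $n-c$, and a second hit anchored at $r$ brings $n$ into the final slot. One more application of the inductive hypothesis on the first $n-1$ letters finishes. The essential idea you are missing is that the induction must drop the \emph{last} letter, not the first, and that some nontrivial rearrangement near the right end is still needed after that.
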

\begin{proof}
Note that since $w$ is $k$-squished with $ck + 1 \ge n$, $w$ is
actually $n$-squished. In particular, for $j > k$, the letter $j$ in
$w$ appears in position at most $ck + 1$ and thus is position at most
$c(j - 1) + 1$.

Assume that the lemma holds in $S_{n-1}$ (with a trivial inductive
base case of $S_{c+1}$). Let $w$ be a $n$-squished permutation of
length $n \ge c + 2$. Let $r$ be $\lceil (n-1)/c \rceil$. Because $w$
is $n$-squished, $r$ is in position at most $(\lceil (n-1)/c \rceil -
1)c + 1 < n$, and thus cannot be in the final position of
$w$. Applying the inductive hypothesis to the first $n-1$ letters of
$w$, we can reach a permutation $w'$ which has each letter $j \le r$
in position $c(j-1)+1$; and which with exception of the final letter,
has the other letters in increasing order. In $w'$, $r$ is in position
$(\lceil (n-1)/c \rceil - 1) c+1 \ge n - c$ and is thus in the final
$c+1$ positions but not in the final position; $r-1$ is $c$ positions
before it. Using the $\{x \mid x \in S_{c+1}, \ x_1 = 1\}$-hit
starting with $r-1$, we rearrange $w'$ to have $r$ in position $n-c$,
while not changing the relative order of any other letters. At this
point, $r$ is followed by the final $c$ letters in the permutation,
each of which exceeds $r$ in value. Thus the final $c+1$ letters of
the permutation form a $\{x \mid x \in S_{c+1}, \ x_1 =
1\}$-hit. Noting that $n$ is in the final two positions of the
permutation, we can rearrange the final two letters of the permutation
so that $n$ is in the final position. Finally, we may apply the
inductive hypothesis to the first $n-1$ letters and reach the
identity.
\end{proof}

\begin{cor}\label{coridentityclass1}
Suppose $c+1 \le n$. Then the permutations in $S_n$ equivalent to the
identity under $\{x \mid x \in S_{c+1}, \ x_1 = 1\}$-equivalence are
exactly those which are $k$-squished for all $k \leq n$.
\end{cor}
\begin{proof}
The identity permutation is $k$-squished for all $k \leq n$. It
follows by Lemma \ref{lemsquishclosure} that the same is true for
every equivalent permutation under $\{x \mid x \in S_{c+1}, \ x_1 =
1\}$-equivalence. On the other hand, suppose $w$ is $k$-squished for
all $k \leq n$. Then Lemma \ref{squishinglem} implies $w$ is equivalent
to the identity under $\{x \mid x \in S_{c+1}, \ x_1 =
1\}$-equivalence.
\end{proof}

It is interesting to compute the size of the equivalence class
containing the identity.
\begin{thm}\label{thmcountsquished}
Let $n \ge c+1$. Let $r = \lfloor (n-1) / c \rfloor + 1$. The size of
the equivalence class containing the identity in $S_n$ under $\{x
\mid x \in S_n, \ x_1 = 1\}$-equivalence is
$$(n-r)! \prod_{j \in [1, r]} \left((j-1)(c-1) + 1\right).$$
\end{thm}
\begin{proof}
By Corollary \ref{coridentityclass1}, it suffices to count the
$n$-squished permutations in $S_n$. The letters in the set $[1,
  \lfloor (n-1) / c \rfloor + 1]$ are constrained by this. The letter
$1$ has only one position in which it can be placed. Once $1$ has
been placed, there are $c$ positions in which $2$ can be placed (so
that it appears in the first $c + 1$ positions). Then there are $2c -
1$ positions in which $3$ can be placed (so that it appears in the
first $2c + 1$ positions). Continuing like this, if $r = \lfloor (n-1)
/ c \rfloor + 1$, then there are
$$\prod_{j \in [1, r]} \left((j-1)(c-1) + 1\right)$$ ways to choose
the positions for the letters $1, \ldots, r$. The remaining $n - r$
letters can be placed in the remaining positions arbitrarily. The
total number of $n$-squished permutations is therefore
$$(n-r)! \prod_{j \in [1, r]} \left((j-1)(c-1) + 1\right).$$
\end{proof}

Let $f(n)$ be the number of classes in $S_n$ under $\{ x \mid x\in
S_{c+1},\ x_1=1 \}$-equivalence. We will ultimately find a formula for
$f(n)$ in Theorem \ref{thmgetf}.

In order to do so, we will first need an additional function. Let
$g(k, n)$ be the number of classes in $S_n$ containing permutations
that are $k$-squished under $\{ x \mid x\in S_{c+1},\ x_1=1
\}$-equivalence.

We start by computing $g$:
\begin{lem}\label{lemcountsquishedclasses}
$$g(k, n) =
\begin{cases}
 (n-1)! , & \text{if } n<c+1 \\
 1, & \text{if } c+1 \le n \le ck+1 \\
g(k+1, n)+\sum\limits_{j=ck + 2}^{n}{g(k, j-1) \cdot g(1,n-j+1) \cdot {n-k-1 \choose n-j}}, & \text{ if } n > ck+1
\end{cases}
$$

\end{lem}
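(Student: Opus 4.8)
The plan is to establish the recurrence by a case analysis on $n$ relative to $ck+1$, matching the three branches of the piecewise definition. The first two cases are essentially already done: for $n < c+1$ no hit of size $c+1$ can occur, so the $\{x\mid x\in S_{c+1},\ x_1=1\}$-equivalence is trivial and each of the $(n-1)!$ $k$-squished permutations of $S_n$ (there are $(n-1)!$ because once $1$ is placed it must be first, but wait — more carefully, $k$-squishedness forces each $j\le k$ into position $1$ for $j=1$, etc.; in the range $n<c+1$ one checks directly that the count is $(n-1)!$) forms its own class, and for $c+1\le n\le ck+1$ Lemma~\ref{squishinglem} shows every $k$-squished permutation is equivalent to the identity, so $g(k,n)=1$. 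So the real content is the third branch, $n>ck+1$.

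For $n>ck+1$, the key structural observation I would exploit is that a $k$-squished permutation is either also $(k{+}1)$-squished — contributing the term $g(k+1,n)$ — or it fails the $(k{+}1)$-squished condition precisely because the letter $k+1$ sits outside the first $ck+1$ positions, say in position $j$ with $ck+2\le j\le n$. I would argue that the position $j$ of the letter $k+1$ is an invariant of its equivalence class among non-$(k{+}1)$-squished $k$-squished permutations: since every allowed replacement pattern begins with $1$ (i.e.\ with its smallest letter), a $K$-transformation applied to a window can only move letters that are all larger than the leftmost letter of that window, and one checks that the letter $k+1$, being forced to the far right of any window that could contain it, cannot be displaced — so $j$ is well-defined on classes. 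Having fixed $j$, the letters to the left of position $j$ that are $\le k$ must be exactly $\{1,\dots,k\}$ in their squished positions together with some larger "filler" letters in increasing order, and similarly the block strictly after position $j$ consists of letters in increasing order up to rearrangement by hits. This decomposes the class data into: (i) a class of a $k$-squished permutation on the first $j-1$ positions (after standardizing), giving $g(k,j-1)$; (ii) a class of an arbitrary permutation on the last $n-j$ positions under the size-$(c+1)$ relation, i.e.\ a $1$-squished situation on $n-j+1$ letters, giving $g(1,n-j+1)$ (the extra $+1$ absorbing the pivot letter $k+1$ which acts as the forced "$1$" of that sub-block); and (iii) a choice of which values among $\{k+2,\dots,n\}$ land in the left block versus the right block, which is $\binom{n-k-1}{n-j}$. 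Summing over $j$ from $ck+2$ to $n$ and adding the $(k{+}1)$-squished contribution yields the stated formula.

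The main obstacle I anticipate is (ii): showing that the equivalence class of the left portion and the equivalence class of the right portion are genuinely independent and that no transformation straddles position $j$ in a way that couples them. I would handle this by the same mechanism as above — any size-$(c+1)$ window straddling position $j$ would have to contain the letter $k+1$ somewhere in its interior, but since $k+1$ is smaller than every filler letter to its left that could be in such a window and the pattern's first letter is the window's minimum, one shows no legal transformation can use such a straddling window (or, if it can, that its effect is already accounted for by transformations confined to one side). Making this rigorous — identifying exactly which letters can appear in a straddling window and ruling out nontrivial straddling moves — is the crux; everything else is bookkeeping with binomial coefficients and an induction on $n$ parallel to the one in Lemma~\ref{squishinglem}. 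A secondary check is the base-case boundary $n = ck+1$ versus $n=ck+2$, ensuring the recurrence's lower summation limit $j=ck+2$ is consistent with the middle case giving $1$.
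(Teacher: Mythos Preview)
Your approach is essentially identical to the paper's: split $k$-squished permutations according to whether they are also $(k{+}1)$-squished, argue that in the non-$(k{+}1)$-squished case the position $j$ of the letter $k{+}1$ is a class invariant (because none of $1,\dots,k$ can reach a window containing position $j$, so $k{+}1$ is the minimum of any such window and hence fixed), and then factor the class into independent left and right pieces counted by $g(k,j-1)$, $g(1,n-j+1)$, and the binomial choice. One slip to correct in your write-up: when $k{+}1$ lies in a transformable window it is forced to the \emph{leftmost} position (as the window's minimum), not the ``far right''---this is precisely why it cannot move and why no straddling window admits a transformation.
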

\begin{proof}

\textbf{\center Case of $c+1 > n$:} In this case, are $(n-1)!$ $k$-squished permutations
in $S_n$ because they are exactly the permutations beginning with
$1$. No transformations can be applied to a permutation of size less
than $c+1$. Thus each $k$-squished permutation is in its own
equivalence class.

\textbf{\center Case of $c+1 \leq n$ and $ck + 1 \ge n$: } In this case, the
$k$-squished permutations are all equivalent to the identity (Lemma
\ref{squishinglem}). This was also shown by \cite{LPRW} for the case
of $c=2$.

\textbf{\center Case of $ck + 1 < n$: } Consider the permutations in $S_n$
which are $k+1$-squished. By definition, such permutations fall into
$g(k+1, n)$ classes.

Let $w \in S_n$ be $k$-squished but not $k+1$-squished. Let $i$ be the
position of $k+1$ in $w$. Because $w$ is not $k+1$-squished, we know
that $i \ge ck + 2$. On the other hand, for all $x \in S_n$ such that
$x \equiv w$, we know that $x$ is $k$-squished (Lemma
\ref{lemsquishclosure}). It follows that every letter of value $\leq
k$ is in position $< c(k-1) + 1$ in $x$. So using $\{x \mid x \in S_n,
\ x_1 = 1\}$-rearrangements, we can never push any letter less than
$k+1$ far enough to the right so that it will be in a $\{x \mid x \in
S_{c+1}, \ x_1 = 1\}$-hit with $k+1$. Consequently, $k+1$ will never
be in a $\{x \mid x \in S_{c+1}, \ x_1 = 1\}$-hit where it is not the
first letter.

It follows that the equivalence class containing $w$ in $S_n$ is the
direct sum of the equivalence class containing the first $i-1$ letters
of $w$ with the equivalence class containing the final $n-i+1$ letters
of $w$.

There are $g(k, i-1)$ possibilities for the equivalence class on the
left and $g(1, n-i+1)$ possibilities for the equivalence class on the
right.

There are ${n-k-1 \choose n-i}$ options for which letters precede
position $i$ and which letters follow position $i$ in $w$. In
particular, we do not get any choice for the letters
$1,\ldots,k+1$. But we do get to choose which $n-i$ of the remaining
$n-k-1$ letters are to be to the right of the $i$-th position.

Thus the number of equivalence classes containing permutations in
$S_n$ which are $k$-squished but not $k+1$-squished, and which contain
$k+1$ in the $i$-th position, is $${n-k-1 \choose n-i}g(k, i-1)g(1,
n-i+1).$$

Summing over the possibilities for $i$, we find that the number of
classes in $S_n$ containing $k$-squished but not $k+1$-squished
permutations is
$$\sum\limits_{j=ck + 2}^{n}{g(k, j-1) \cdot g(1,n-j+1) \cdot {n-k-1
    \choose n-j}},$$
completing the proof.
\end{proof}

Now we can count all of the equivalence classes in $S_n$.
\begin{thm}\label{thmgetf}
Recall that $f(n)$ is the number of equivalence classes in $S_n$ under $\{x \mid x \in S_n, \ x_1 = 1\}$-equivalence. Then
$$
 f(n) =
\begin{cases}
 n! , & \text{if }n<c+1 \\
\sum\limits_{j=1}^{n}{f(j-1)\cdot g(1,n-j+1) \cdot {n-1 \choose j-1}}, & \text{if }n\ge c+1
\end{cases}
$$
\end{thm}
\begin{proof}
As before, the case of $n<c+1$ is trivial. Consider the case of $n \ge
c+1$. Under the transformations considered, the position of $1$ does
not change. Furthermore, the letters to the left and right of $1$ move
independently of each other. In other words, if $w \in S_n$ contains
$1$ in position $j$, then the equivalence class containing $w$ is the
direct sum of the equivalence class containing $w_1w_2\cdots w_{j-1}$
with the equivalence class containing $w_jw_{j+1}\cdots w_n$.

Thus permutations with $1$ in position $j$ fall into $f(j-1)\cdot g(1,
n-j+1)$ classes for a given choice of which letters are to the left
and right of $1$. There are $n-1 \choose j-1$ such choices, resulting
in $f(j-1)\cdot g(1, n-j+1) \cdot {n-1 \choose j-1}$ classes. If we
sum for each possible $j$, we get $$\sum\limits_{j=1}^{n}{f(j-1)\cdot
  g(1,n-j+1) \cdot {n-1 \choose j-1}} \text{ classes in }S_n.$$
\end{proof}

In the preceding results, we have already implicitly characterized the
equivalence classes in $S_n$ under $\{x \mid x \in S_n, \ x_1 =
1\}$-equivalence. We formalize this in the following remark.

\begin{rem}\label{remcount1}
Given a permutation $w$ in $S_n$ (where $n \ge c+1$), one can solve
for its class size under $\{x \mid x \in S_n, \ x_1 =
1\}$-equivalence (and characterize the class along the way).

If $w$ is $n$-squished, then the enumeration is simply the number of
$n$-squished permutations in $S_n$ (by Corollary
\ref{coridentityclass1}). This, in turn comes from Theorem
\ref{thmcountsquished}.

Otherwise, we take the smallest positive $k$ such that $w$ is not
$k$-squished. Let $i$ be the position of $k$ in $w$. In the proof of
Lemma \ref{lemcountsquishedclasses}, we established that if $k>1$ then
the equivalence class containing $w$ in $S_n$ is the direct sum of the
equivalence class containing the first $i-1$ letters of $w$ with the
equivalence class containing the final $n-i+1$ letters of $w$. In the
proof of Theorem \ref{thmgetf} we showed this to also be true when
$k=1$. Thus we can recursively compute the sizes of the two
equivalence classes in the direct product to get the size of the
equivalence class containing $w$.
\end{rem}

\begin{rem}
Observe from Remark \ref{remcount1} that every equivalence class has a
size which is either 1 or is the product of values obtained in various
cases of Theorem \ref{thmcountsquished}.
\end{rem}

\section{$\mathbf{ \{ x \mid x\in S_{c+1},\ x_1=1 \} \ \{ x \mid x\in S_{c+1},\ x_{c+1}=1 \}}$-Equivalence}\label{subsecstartandendwith1}

As in the previous section, we define $c$ to be one less than the
size of the permutations in the replacement partition under
consideration.

In this section, we study the $\{ x \mid x\in
  S_{c+1},\ x_1=1 \} \ \{ x \mid x\in S_{c+1},\ x_{c+1}=1
  \}$-equivalence. For example, when $c=2$, this is the $\{123,
132\}\{231, 321\}$-equivalence. 

This section proceeds as follows. For a given $c$, we count the
equivalence classes in $S_n$ under $\{ x \mid x\in
  S_{c+1},\ x_1=1 \}$-equivalence (Theorem \ref{thmgetf}). In the
process, we characterize and enumerate certain equivalence classes,
including the class containing the identity permutation (Lemma
\ref{lemsquish2} and Theorem \ref{thmcountsquished2}). Then, for a
given $w \in S_n$, we characterize its equivalence class as the
truncated direct product of equivalence classes of the form already
characterized (Remark \ref{remcount2}). Combining our results
together, we have a recipe for characterizing and finding the size of
any equivalence class (Remark \ref{remcount2}).

Our approach throughout this section is an extension of our
approach in Section \ref{subsecstartwith1}. In fact, we will find it useful to at
times explicitly reference Lemma \ref{squishinglem} from the previous
section.

Before continuing, we remark that Theorem \ref{thmgetf2} can be
used to count the equivalence classes under $\{123, 132\}\{231,
321\}$-equivalence. If we combine this enumeration with that of the
forgotten relation \cite{NS}, that of the Knuth relation \cite{Kn},
and those found by Kuszmaul \cite{Kuszmaul12}, then there is only one
remaining unenumerated replacement partition of the form
$\{a,b\}\{c,d\}$ where $a,b,c,d \in S_3$. In particular, $\{213,
312\}\{132, 231\}$-equivalence is the only remaining such equivalence
for which there is no known formula counting equivalence classes.

\begin{defn}
Let $j,k,n$ be nonnegative integers such that $j+k \le n$. Let $w \in
S_n$. We say that $w \in S_n$ is \emph{$j,k$-squished} if $w$ is the
concatenation of two words $a$ and $b$ such that $a$ forms a
$j$-squished permutation and $b$ forms a $k$-squished permutation
written from right to left. (i.e., if we write $b$ backwards, it forms
a $k$-squished permutation.)

Let $J$ be the smallest $j$ letters of $a$ and $K$ be the smallest $k$
letters of $b$ (in value). Then in addition to $w$ being
$j,k$-squished, we say that $J$ and $K$ \emph{$j,k$-split} $w$.
\end{defn}

The following two lemmas are similar to Lemma \ref{lemsquishclosure}
from the previous section. Note that the second lemma includes the
first in all cases but when $c(j-1)+1 = n$ and $k=0$, or $c(k-1)+1 =
n$ and $j=0$.
\begin{lem}\label{lemclosurerewritten}
Let $x, y \in S_n$ such that $x \equiv y$ under $\{ x \mid x\in
S_{c+1},\ x_1=1 \}$ $\{ x \mid x\in S_{c+1},\ x_{c+1}=1
\}$-equivalence. If $x$ is $k$-squished for a given $k$, then so is
$y$.
\end{lem}
\begin{proof}
This is straightforward
\end{proof}

\begin{lem}\label{lemclosure2}
Suppose $j,k,n$ are nonnegative integers and that $ck + cj + 2 - c \le
n$. Suppose $y \in S_n$ and that $J$ and $K$ are sets which
$j,k$-split $y$. Then for all $z \equiv y$ under $\{ x \mid x\in
S_{c+1},\ x_1=1 \}$ $\{ x \mid x\in S_{c+1},\ x_{c+1}=1 \}$-equivalence,
$J$ and $K$ $j,k$-split $z$.
\end{lem}
\begin{proof}
Because $ck + cj +2 -c \le n$, there is no $\{ x \mid x\in
S_{c+1},\ x_1=1 \}$ $\{ x \mid x\in S_{c+1},\ x_{c+1}=1 \}$-hit in $y$
containing both an element of $J$ and an element of $K$. By Lemma
\ref{lemclosurerewritten}, this remains the case as we perform $\{ x
\mid x\in S_{c+1},\ x_1=1 \}$ $\{ x \mid x\in S_{c+1},\ x_{c+1}=1
\}$-rearrangements in $y$ (since the first $ck + 1$ letters remain
$k$-split ad the final $cj + 1$ letters remain $j$-split). Thus Lemma
\ref{lemclosurerewritten} is sufficient to guarantee that repeated $\{
x \mid x\in S_{c+1},\ x_1=1 \}$ $\{ x \mid x\in S_{c+1},\ x_{c+1}=1
\}$-rearrangements in $y$ yield a permutation which is still
$j,k$-split by $J$ and $K$.
\end{proof}

Let $f(n)$ be the number of classes in $S_n$ under $\{ x \mid x\in
S_{c+1},\ x_1=1 \}$ $\{ x \mid x\in S_{c+1},\ x_{c+1}=1
\}$-equivalence. We will enumerate $f$ in Theorem \ref{thmgetf2}.

When both $ck+cj+2-c \le n$ and $j,k > 0$, or both $jc+1 \ge n$
and $k=0$, or both $kc+1 \ge n$ and $j=0$, then we also define a
function $g(j, k, n)$. The cases for when we define $g$ are motivated
by Lemmas \ref{lemclosurerewritten} and \ref{lemclosure2}. If one of
the three cases holds, then the lemmas guarantee that equivalence
classes containing $j,k$-squished permutations contain only
$j,k$-squished permutations. We define $g(j,k,n)$ to be the number of
classes in $S_n$ comprising permutations that are $j,k$-squished for a
given choice of which $J$ and $K$ $j,k$-split them.

At first sight it may appear that $g(j,k,n)$ is not well-defined
because changing our choice of $J$ and $K$ may change its value. We
will see, however, that this is not the case.

The following is analogous to Lemma \ref{squishinglem} from the previous section:
\begin{lem}\label{lemsquish2}
Let $j,k \ge 1$. Suppose $cj+ck+1 \ge n \ge c+1$ and $ck + cj + 2 - c
\le n$. Then $g(j, k, n)=1$.
\end{lem}
\begin{proof}
Given such $j,k,c,n$, let $w \in S_n$ be $j,k$-squished and let $J$
and $K$ be such that $J$ and $K$ $j,k$-split $w$.

First we show that for any letters $m$ and $n$ in $w$, both greater
than $j+k$ in value, we can swap $m$ and $n$ in $w$ through a series
of $\{ x \mid x\in S_{c+1},\ x_1=1 \}$ $\{ x \mid x\in S_{c+1},\ x_{c+1}=1
\}$-rearrangements. Suppose without loss of generality that $m$
appears in the first $jc$ letters\footnote{Observe that at least one
  of $m$ and $n$ must either appear in the first $jc$ letters or in
  the final $kc$ letters because $cj +ck+1 \ge n$.} of $w$. If $n$
also appears in the first $jc +1$ letters of $w$, then we are done by
applying Lemma \ref{squishinglem} to the first $cj+1$ letters in
$w$. Otherwise, observe that the letter $l$ in position $cj+1$ is
greater than $j+k$ in value (because the first $j+k$ letters in value
are restricted in position by $w$ being $j,k$-squished). Thus we we
may apply Lemma \ref{squishinglem} to the first $cj+1$ letters of
$w$ to swap $m$ and $l$. Then we may apply Lemma \ref{squishinglem} to
the final $ck+1$ letters to swap $m$ and $n$. Finally, we may apply
Lemma \ref{squishinglem} to the first $cj+1$ letters again to this
time swap $l$ and $n$. This completes the swap of $m$ and $n$ in $w$.

As a consequence of being able to swap any such $m$ and $n$, we can
ignore the relative order of such $m$ and $n$ in $w$. In other words,
let $w'$ be $w$ except with that letters with values greater than
$j+k$ having been rearranged to be in increasing order. Then we have
shown that $w \equiv w'$ under $\{ x \mid x\in S_{c+1},\ x_1=1 \}$ $\{
x \mid x\in S_{c+1},\ x_{c+1}=1 \}$-equivalence. Applying Lemma
\ref{squishinglem} to the first $c(j - 1) + 1$ letters of $w'$, we can
slide the letters in $J$ to the front of $w'$ (and to be in increasing
order); and applying the lemma to the final $c(k - 1) + 1$ letters of
$w'$, we can slide the letters in $K$ to the front of $w'$ (and to be
in decreasing order). Thus $w'$ is equivalent to the permutation
beginning with the letters in $J$ in increasing order, ending with the
letters in $K$ in decreasing order, and with all other letters in
increasing order.
\end{proof}

Suppose $cj+ck+1 \ge n \ge c+1$ and $ck + cj + 2 - c \le n$. We have
seen that $g(j, k, n)=1$. The following Theorem gives us the size of
the equivalence class counted by $g(j,k,n)$ in this case.
\begin{thm}\label{thmcountsquished2}
Suppose $cj+ck+1 \ge n \ge c+1$ and $ck + cj + 2 - c \le n$. Then the
size of the equivalence class containing $w$ is

$$(n-j-k)! \prod_{i \in [1, j]}\left((i-1)(c-1)+1\right)\prod_{i \in [1, k]}\left((i-1)(c-1)+1\right).$$ 
\end{thm}
\begin{proof}
By Lemmas \ref{lemsquish2} and \ref{lemclosure2}, it suffices to
count the $j,k$-squished permutations which are $j,k$-split by the
same sets as $w$. Let $J$ and $K$ be the sets such that $J$ and $K$
$j,k$-split $w$. Note that $J$ and $K$ are unique (forced by
$ck+cj+2-c \le n$).

For a permutation $w' \in S_n$ to be $j,k$-split by $J$ and $K$, the
letters in $J$ can placed in any way that makes the first $jc + 1$
letters $j$-squished. And the letters in $K$ can be placed in any way
that makes the final $kc + 1$ letters $k$-squished from right to
left. Thus by the same logic as in the proof of Theorem
\ref{thmcountsquished}, we have $$\prod_{i \in [1,
    j]}\left((i-1)(c-1)+1\right)$$ possibilities for the positions of
letters in $J$ and $$\prod_{i \in [1, k]}\left((i-1)(c-1)+1\right)$$
possibilities for the positions of the letters in $K$. Multiplying
this by the $(n - j - k)!$ ways to order the remaining letters, we get
the desired quantity.
\end{proof}

Before enumerating $g(j, k, n)$, we must define the \emph{truncated
  direct product} of the equivalence class of a word $x$ with the
equivalence class of a word $y$.
\begin{defn}
The \emph{truncated direct product} of the equivalence class of a
word $x$ with the equivalence class of a word $y$ is the set of
concatenations of elements from the first with elements from the
second, except with the following modification. Before each
concatenation, the final letter in the element from the class
containing $x$ is removed.
\end{defn}

\begin{ex} The truncated direct product of $\{123, 213\}$ with
$\{345, 354\}$ is $\{12345, 12354, 21345, 21354\}$.
\end{ex}

Now we can enumerate $g(j,k,n)$.
\begin{lem}\label{secondgcountlem}
Recall that when either both $ck+cj+2-c \le n$ and $j,k > 0$, or both
$jc+1 \ge n$ and $k=0$, or both $kc+1 \ge n$ and $j=0$, then we also
define a function $g(j, k, n)$.  We define $g(j,k,n)$ to be the number
of classes in $S_n$ comprising permutations that are $j,k$-squished
for a given choice of which $J$ and $K$ $j,k$-split them. The quantity $g(j, k, n)$ satisfies the formula
$$
 g(j, k, n) =
\begin{cases}
 (n-1)! , & \text{if } n<c+1 \\
 1, & \text{if } ck+cj+1 \ge n \ge c+1 \\
g(j+1, k, n)+g(j, k+1, n)+ \\
\sum\limits_{i=cj+2}^{n-ck-1}{g(j, 1, i) \cdot g(1,k,n-i+1) \cdot {n-j-k-1 \choose i-j-1}}, & \text{otherwise.} 
\end{cases}
$$
\end{lem}

\begin{proof}
The case of $n<c+1$ is trivial because no transformations can be
applied to permutations of size $<c+1$. The case of $cj+ck+1 \ge n \ge
c+1$ yields one class by Lemma \ref{lemsquish2} (in the case where
$j,k > 0$) and Lemma \ref{squishinglem} (in the case where $j=0$ or
$k=0$). Now, we consider the remaining case where $ck+cj+1 < n$.

Let $w$ be $j,k$-split by $A$ and $B$. Note that repeated $\{ x \mid x\in
S_{c+1},\ x_1=1 \}$ $\{ x \mid x\in S_{c+1},\ x_{c+1}=1
\}$-rearrangements on $w$ cannot move the letter $j+k+1$ between being
in the first $cj+1$ positions, in the final $ck+1$ positions, or
in-between. Indeed, this falls from Lemma \ref{lemclosure2}.

Now we consider the possibilities for $w$'s equivalence class in three
cases, when $j+k+1$ is in the first $cj+1$ letters of $w$, when
$j+k+1$ is in the final $ck+1$ letters of $w$, and when it is in
neither. Note that we have already shown above that the equivalence
class possibilities do not overlap between cases.

\textbf{Case 1:} If $j+k+1$ is in the first $cj+1$ letters of $w$,
then there are $g(j+1, k, n)$ possibilities for the equivalence class
of $w$.

\textbf{Case 2:} If $j+k+1$ is in the final $ck+1$ letters, then there
are $g(j, k+1, n)$ possibilities for the equivalence class of $w$.

\textbf{Case 3:} Suppose $j+k+1$ is neither in the first $cj+1$
letters nor the final $ck+1$ letters of $w$. Then for all $q \in S_n$
such that $w \equiv q$ under $\{ x \mid x\in S_{c+1},\ x_1=1 \}$ $\{
x \mid x\in S_{c+1},\ x_{c+1}=1 \}$, we have that $j+k+1$ acts as $1$ in
any $\{ x \mid x\in S_{c+1},\ x_1=1 \}$ $\{ x \mid x\in S_{c+1},\ x_{c+1}=1
\}$-hits containing $j+k+1$. Indeed, any letters of value less than
$j+k+1$ are kept either in the first $c(j-1)+1$ letters or final
$c(k-1)+1$ letters by Lemma \ref{lemclosure2}. Let $i$ be
the position of $j+k+1$ in $w$. It follows that the equivalence class
containing $w$ is the truncated direct product of the equivalence class
containing the first $i$ letters of $w$ with the equivalence class
containing the final $n-i+1$ letters of $w$.

There are $g(j, 1, i) \cdot g(1, k, n-i+1)$ possibilities for the pair
of equivalence classes in forming the truncated direct product. For each such
pair of equivalence classes, we also get to pick which letters are to
the left of $j+k+1$ and which are to the right in $w$. Out of the
highest $n-(j+k+1)$ letters, we get to choose $i-j-1$ of them to be to
the left of $k+j+1$ and the remaining to be to the right. Thus there
are $g(j, 1, i) \cdot g(1, k, n-i+1) \cdot {x-j-k-1
  \choose i-j-1}$ possibilities for the equivalence class containing $w$.

\textbf{Putting the cases together: } If we sum over the possibilities
for $i$ in the third case, then the total number of equivalence
classes in $S_n$ found by summing over the three cases is
$$g(j+1, k, n)+g(j, k+1, n)+\sum\limits_{i=cj+2}^{n-ck-1}{g(j, 1, i)
  \cdot g(1,k,n-i+1) \cdot {n-j-k-1 \choose i-j-1}}\text{ .}$$

The reader may wish to verify that the values of $g$ called in the
recursion stay within the domain of $g$.
\end{proof}

Finally, we can solve for the number of equivalence classes in $S_n$.
\begin{thm}\label{thmgetf2}
Recall that $f(n)$ is the number of equivalence classes in $S_n$ under $\{ x \mid x\in S_{c+1},\ x_1=1 \}$ $\{
x \mid x\in S_{c+1},\ x_{c+1}=1 \}$-equivalence. The quantity $f(n)$ satisfies
$$
 f(n) =
\begin{cases}
 n! , & \text{if }n<c+1 \\
\sum\limits_{j=1}^{n}{g(0,1,j)\cdot g(1, 0,n-j+1) \cdot {n-1 \choose j-1}}, & \text{if }n\ge c+1.
\end{cases}
$$
\end{thm}

\begin{proof}
The case of $n<c+1$ is trivial. Consider the case of $n \ge
c+1$. Under $\{ x \mid x\in S_{c+1},\ x_1=1 \}$ $\{ x \mid x\in
S_{c+1},\ x_{c+1}=1 \}$-equivalence, the position of $1$ never
changes. Furthermore, the letters to the left and right of $1$ move
independently of each other. Thus if a permutation $w$ has $1$ in
position $i$, then the equivalence class containing $w$ is the
truncated direct product of the equivalence class containing the first
$i$ letters of $w$ with the equivalence class containing the final
$n-i+1$ letters of $w$.

So, permutations with $1$ in position $i$ fall into $g(0, 1, i)\cdot
g(1, 0, n-i+1)$ classes for a given choice of which letters are to the
left and right of $1$. There are $n-1 \choose i-1$ such choices,
resulting in $g(0,1,j)\cdot g(1, 0,n-i+1) \cdot {n-1 \choose i-1}$
classes. If we sum over $i$, we obtain the desired formula.
\end{proof}

In the preceding results, we have already implicitly characterized the
equivalence classes in $S_n$ under $\{ x \mid x\in S_{c+1},\ x_1=1
\}$ $\{ x \mid x\in S_{c+1},\ x_{c+1}=1 \}$-equivalence. We formalize
this in the following remark.
\begin{rem}\label{remcount2}
Given a permutation $w$ in $S_n$ (where $n \ge c+1$), one can solve
for its class size under $\{ x \mid x\in S_{c+1},\ x_1=1 \}$ $\{ x \mid x\in
S_{c+1},\ x_{c+1}=1 \}$-equivalence (characterizing the class along
the way).

If $w$ is $n$-squished or if $w$ written from right to left (i.e.,
backwards) is $n$-squished, then Lemma \ref{squishinglem} and Lemma
\ref{lemclosurerewritten} imply that the size of the class containing
$w$ is given by Theorem \ref{thmcountsquished2}.

Otherwise, if $w$ is $j,k$-squished for $j,k > 0$ such that $cj+ck+1
\ge n$ while $ck+cj+2-c \le n$, then Lemma \ref{lemsquish2} and Lemma
\ref{lemclosure2} imply that the size of the class containing $w$ is
once again given by Theorem \ref{thmcountsquished2}.

Otherwise, let $r$ be the largest integer such that there is some $j$
and $k$ with $j+k = r$ and such that $w$ is $j,k$-squished. Because
$w$ did not fall into either of the previous two cases, we are
guaranteed that $cj+ck+1 < n$. Let $i$ be the position of $r+1$ in
$w$. In the proof of Lemma \ref{secondgcountlem}, it was shown that
the equivalence class containing $w$ is the truncated direct
product of the equivalence class containing the first $i$ letters of
$w$ with the equivalence class containing the final $n-i+1$ letters of
$w$.  Thus we can recursively compute the sizes of the two equivalence
classes in the truncated direct product to get the size of the equivalence
class containing $w$.
\end{rem}

\begin{rem}
Observe from Remark \ref{remcount2} that every equivalence class has a
size which is either 1 or is the product of values obtained in various
cases of Theorem \ref{thmcountsquished2}.
\end{rem}

\section{Conclusion and Directions of Future Work}\label{secconclusion}

It would be interesting to continue studying equivalence classes in
$S_n$ created by a replacement set comprising a pattern in $S_c$ and
its cyclic shifts (as in Section \ref{subsecrot}). When $c$ is
even, the behavior for when there is one nontrivial class versus two
nontrivial classes is still a mystery. If we focus on the case where
the pattern is $\id_c$, then when $n$ and $c$ are odd and $n >
\frac{c}{2}$, the relative sizes of the classes behave in enigmatic
patterns. In this case, the problem is equivalent to comparing the
number of even and odd hit-huggers beginning with $1$ in $S_n$ (Lemma
\ref{hit-ended --> non-avoiding}). Figure~\ref{endedtable} shows
computational results for this (collected in C++). Note that we do not
consider the case of $c=3$ because Corollary 3.2 of \cite{PRW} covers
this case in its entirety.

\begin{figure}[h]
\caption{The numbers of even and odd hit-hugging permutations in $S_n$
  beginning with $1$ where $m = \id_c $. Data is
  organized according to $n-2c$.}
\begin{center}
    \begin{tabular}{ | l | l | l | l | l |}
    \hline
     $n$ & $c$ & \# of even & \# of odd & \# of even $-$ \# of odd \\ \hline
19& 7& 2951215617& 2951215365& 252 \\ \hline \hline

13& 5& 129963& 129949& 14 \\ \hline
17& 7& 9687436& 9687390&  46 \\ \hline
21& 9& 437585672& 437585530& 142 \\ \hline \hline

11& 5& 1097& 1103& -6 \\ \hline
15& 7& 41854& 41883& -29 \\ \hline
19& 9& 1166613& 1166743& -130 \\ \hline
23& 11& 28050890& 28051452& -562  \\ \hline
     \end{tabular}
\end{center}
\label{endedtable}
\end{figure}

The following two conjectures are motivated by the data. We suspect that the data is best studied by grouping data together according to $n - 2c$.

\begin{conj}
Let $k$ be a fixed positive odd integer. Let $c$ be an arbitrary odd positive integer and $n=2c+k$. Then, the number of odd hit-huggers beginning with $1$ in $S_n$ subtracted from the number of even ones has the same sign for any pick of $c$. 
\end{conj}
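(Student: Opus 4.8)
The plan is to fix the odd parameter $k$ and study the sign of $\delta(c) := |_1H_n^{even}| - |_1H_n^{odd}|$ where $n = 2c+k$, as a function of $c$. Since $n = 2c+k$ automatically satisfies $c > n/2$ when... actually $c > n/2$ means $2c > n = 2c+k$, which fails; but the relevant hypothesis for the tableaux machinery is $c > n/2$, so we instead work in the regime where the hit-hugger characterization of Lemma \ref{hit-ended-->Young tableaux} still applies (note the hit-huggers here begin with $1$ rather than end with $n$, so I would first establish the analogue of Lemma \ref{hit-ended-->Young tableaux} for hit-huggers beginning with $1$, via the evident symmetry: a hit-hugger beginning with $1$ in $S_n$ corresponds to $1 \rightharpoonup$ (a hit-ended-ish object), or more directly by reflecting the argument). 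The upshot of Lemmas \ref{hit-ended-->Young tableaux}, \ref{tableaux to lattice paths} should be a parity-preserving bijection between the hit-huggers in question and lattice paths in an $(n-c-1)\times(n-c-1) = (c+k-1)\times(c+k-1)$ grid staying weakly above the diagonal, with the parity of a hit-hugger matching the parity of the area enclosed above its path.

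Next I would reduce the sign question to a purely combinatorial statement about signed area on Dyck-type paths. Writing $E(N)$ and $O(N)$ for the number of weakly-above-diagonal paths in an $N \times N$ grid with even resp.\ odd enclosed area, Lemma \ref{diff even odd lattice paths} already tells us $E(N) - O(N) = C_{\lfloor (N-1)/2\rfloor}$ or a similar Catalan value depending on the parity of $N$; here $N = c+k-1$ and $k$ is odd, so the parity of $N$ is the parity of $c$. Thus if I only cared about the count $E(N)-O(N)$ it would always be a positive Catalan number and the sign would be trivially constant. The subtlety — and the reason the conjecture is nontrivial — is that $\delta(c)$ is \emph{not} $E(N)-O(N)$ but rather the difference restricted to hit-huggers beginning with $1$, which under the bijection corresponds to a \emph{proper sub-family} of paths (those satisfying an extra constraint coming from the ``begins with $1$'' condition, presumably a restriction on the first leg or on where the path can touch the diagonal). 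So the real task is: show that the signed count of area-parity over this restricted path family has a sign depending only on $k \bmod 4$ (or some fixed residue), independent of $c$.

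To handle the restricted family I would try the same involution as in the proof of Lemma \ref{diff even odd lattice paths}: place X's in squares of even column / odd row, and toggle the path across the first X-square it bounds on two sides. I would need to check this involution preserves the ``begins with $1$'' sub-family (it is local, so it should, provided the first and last legs — which encode the boundary behavior — are never the ones being toggled, exactly as in the original lemma). The fixed points are then paths with no interior odd-length legs, and after the delete-first-column/last-row and halve-the-grid operation from Lemma \ref{diff even odd lattice paths} these should biject with \emph{all} paths in a $\frac{N-1}{2}\times\frac{N-1}{2}$ or $\frac{N-2}{2}\times\frac{N-2}{2}$ grid satisfying a shrunken version of the ``begins with $1$'' constraint. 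The signs of all surviving fixed points agree (all have even enclosed area after the reduction, or all have a fixed parity determined by the number of forced odd boundary legs, which depends only on $k$), so $\delta(c) = \pm(\text{number of paths in the small grid with the shrunken constraint})$, and crucially the overall sign $\pm$ is determined by how many forced odd-length legs the boundary condition contributes — a quantity that depends on $k$ but not on $c$. That common sign is then the content of the conjecture.

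The main obstacle I anticipate is pinning down exactly what the ``begins with $1$'' condition becomes on the path side and verifying the toggling involution respects it — in particular ruling out the degenerate case where the forced first/last legs interact with the X-squares, which is precisely where the parity bookkeeping (and hence the sign) could in principle flip with $c$. A secondary obstacle is the small-$c$ edge cases (the data in Figure~\ref{endedtable} shows the sign of $\delta(c)$ can be negative, e.g.\ $k$ giving $n=11,c=5$), so I would need to track the residue of $k$ modulo $4$ carefully and confirm that the sign predicted by the leg-parity count matches the computed data for each residue class, handling the smallest few values of $c$ by the direct computation already tabulated.
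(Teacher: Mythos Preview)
The paper does not prove this statement --- it is presented as a \emph{conjecture} in Section~\ref{methods}, supported only by the computational data in Figure~\ref{endedtable}. So there is no proof in the paper to compare your proposal against; what you have written is an attempted attack on an open problem.

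That said, your plan has a structural gap that you yourself nearly put your finger on. The bijection machinery of Lemmas~\ref{hit-ended-->Young tableaux}--\ref{diff even odd lattice paths} is proved under the hypothesis $c > n/2$, i.e.\ the two hits of a hit-hugger \emph{overlap} (positions $1,\ldots,c$ and $n-c+1,\ldots,n$ share letters). The conjecture, by contrast, concerns $n = 2c+k$ with $k\ge 1$ odd, so $c < n/2$ and the two hits are \emph{disjoint}, separated by a middle block of length $k$. The proof of Lemma~\ref{hit-ended-->Young tableaux} uses the overlap crucially: the shared letters $a_{n-c+1},\ldots,a_c$ are forced to be the smallest $2c-n$ values, and the remaining letters fill a $2\times(n-c-1)$ tableau. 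When the hits do not overlap there is no such forcing; instead you have two independent cyclic-shift patterns of length $c$ together with $k$ free middle letters, subject only to the avoidance of further hits. The resulting objects are not $2\times N$ tableaux for any obvious $N$, and the tableau-to-path bijection and the X-square involution simply do not see this structure. Your sentence ``we instead work in the regime where the hit-hugger characterization of Lemma~\ref{hit-ended-->Young tableaux} still applies'' is exactly where the argument breaks: that regime is disjoint from the conjecture's hypotheses.

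A genuine attack would need, at minimum, a new combinatorial model for hit-huggers in the disjoint-hit regime (likely parameterized by both $c$ and $k$), and then some sign-reversing involution on that model whose fixed-point set has a sign depending only on $k$. Nothing in the paper supplies such a model, and neither does your proposal.
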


\begin{conj}
Let $n, c$ be odd such that $n=2c+1$. Then, the difference between the number of even and odd hit-huggers beginning with $1$ in $S_n$ is the number of ways of choosing at most $\frac{(c-3)}{2}$ elements from a set of size $c$.
\end{conj}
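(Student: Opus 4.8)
The plan is to combine the chain of bijections already established in Section~\ref{secrot} with a refined count of the relevant lattice paths. By Lemma~\ref{hit-ended-->Young tableaux} and Lemma~\ref{tableaux to lattice paths}, hit-huggers in $S_n$ ending in $n$ correspond bijectively to lattice paths in an $(n-c-1)\times(n-c-1)$ grid, with parity of the hit-hugger matching parity of the area above the path. When $n=2c+1$ we have $n-c-1 = c$, so we are looking at paths in a $c\times c$ grid. To handle hit-huggers \emph{beginning with $1$} rather than ending with $n$, I would first apply the reversal-type symmetry used implicitly in the lemmas (or $\operatorname*{rot}$ together with a complementation) to transfer the count; one must check that the parity-preservation survives this transfer, and since $n$ is odd, $\operatorname*{rot}$ preserves parity, so this should go through with only a careful bookkeeping of how ``beginning with $1$'' translates under the bijection. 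The upshot is that the desired difference equals the number of lattice paths in a $c\times c$ grid with even area above, minus those with odd area above.

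Next I would re-run the involution from the proof of Lemma~\ref{diff even odd lattice paths}, but in the grid of size $c\times c$ with $c$ odd, rather than the even case treated there. Marking X's in squares in even columns and odd rows and toggling at the first square touched on two edges gives a sign-reversing involution; the fixed points are exactly the paths with no odd-length legs except possibly the first and last. In the even-dimension case these fixed points shrank to a $\frac{n-c-2}{2}\times\frac{n-c-2}{2}$ grid and were counted by a Catalan number. With $c$ odd, the parity constraints on legs force a different structure: deleting the first column and last row and halving leaves a grid of dimensions that do not match up evenly, and instead the fixed points are counted by the number of ways to choose at most $\frac{c-3}{2}$ elements from a set of size $c$. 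I would make this precise by encoding each fixed-point path by the positions of its ``short'' legs (the legs of odd length other than the extremal two, which in fact must be absent) versus the freedom in where the even-length legs can be placed; the count $\sum_{i=0}^{(c-3)/2}\binom{c}{i}$ should emerge as the number of valid leg-configurations, exactly as stated. All these paths have even area above, so the difference is non-negative and equals this sum.

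The main obstacle I expect is the combinatorial identification of the fixed-point set of the involution in the odd case: unlike the even case, where the residual object was cleanly a smaller grid, here the ``halving'' step leaves a boundary mismatch of one unit, and one has to argue carefully that the surviving paths are parametrized by subsets of size at most $\frac{c-3}{2}$ of a $c$-element set rather than by some other binomial-sum expression. A secondary technical point is verifying that the translation from ``ending in $n$'' to ``beginning with $1$'' is exactly parity-preserving when $n=2c+1$; here one should double-check the sign contribution $(n-c)(2c-n)$ appearing in Lemma~\ref{hit-ended-->Young tableaux}, which for $n=2c+1$ equals $(c+1)(-1) = -(c+1)$, an even number precisely when $c$ is odd, so the parity bookkeeping is consistent. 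Once both points are settled, the theorem follows by stringing together the bijections and the fixed-point count, mirroring the proof of Theorem~\ref{difference in parity of odd n} with the Catalan number replaced by $\sum_{i=0}^{(c-3)/2}\binom{c}{i}$.
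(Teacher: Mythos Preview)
The statement you are attempting to prove is listed in the paper as a \emph{conjecture}, not a theorem: Section~\ref{methods} presents it as an observation supported only by the computational data in Figure~\ref{endedtable}, with no proof offered. So there is no ``paper's own proof'' to compare against.

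More importantly, your proposed route has a structural gap. Your argument rests on invoking Lemma~\ref{hit-ended-->Young tableaux} (and then Lemma~\ref{tableaux to lattice paths} and the involution of Lemma~\ref{diff even odd lattice paths}) with $n=2c+1$. But Lemma~\ref{hit-ended-->Young tableaux} is stated and proved only under the hypothesis $c>\tfrac{n}{2}$; its proof explicitly uses that the two hits of a hit-hugger \emph{overlap}, so that the letters $a_{n-c+1},\ldots,a_c$ lie in both hits and are forced to be the smallest $2c-n$ values. When $n=2c+1$ we have $c<\tfrac{n}{2}$, the two hits occupy disjoint blocks (positions $1,\ldots,c$ and $c+2,\ldots,2c+1$) with a free letter at position $c+1$, and the entire structural analysis underlying the bijection to $2\times(n-c-1)$ tableaux breaks down. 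You cannot simply ``re-run the involution in a $c\times c$ grid'' because there is no established bijection carrying the hit-huggers to that grid in this regime. The parity bookkeeping you mention, $(n-c)(2c-n)=-(c+1)$, is itself a symptom: for $n=2c+1$ this quantity is negative, reflecting that the subword $a_{n-c+1}\cdots a_c$ used in the lemma's proof is empty (indeed, $n-c+1>c$), so the argument there is vacuous rather than applicable.

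A further warning sign: you conclude that all fixed points of the involution have even area above, hence the signed difference (even minus odd) is nonnegative. But the paper's own data for $n=2c+1$ (the rows $n=11,15,19,23$ in Figure~\ref{endedtable}) shows this difference to be \emph{negative} in every case; the conjectured quantity $\sum_{i\le (c-3)/2}\binom{c}{i}$ matches the data only up to sign. Any correct argument must account for this sign, which your sketch does not. In short, the conjecture lies outside the range where the Section~\ref{secrot} machinery applies, and a genuinely new analysis of hit-huggers with non-overlapping hits would be required.
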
 

\vspace{4cm}

\textbf{Acknowledgments: }We would like to thank Professor Richard Stanley as well as the MIT PRIMES program for suggesting this area of research. We thank Sergei Bernstein and Darij Grinberg for many useful conversations throughout this research project, as well as for helping us put our thoughts on paper. Their help was invaluable. We also thank Dr. T. Khovanova for helping with the editing process of the paper.

\newpage
\bibliographystyle{plain}
\bibliography{arxiv}

\end{document}